\numberwithin{equation}{section}
\newtheorem{theorem}{Theorem}[section]
\newtheorem{proposition}[theorem]{Proposition}
\newtheorem{lemma}[theorem]{Lemma}
\theoremstyle{definition}
\newcommand{\ds}{\displaystyle}
\def\R{\mathbb R}
\def\N{\mathbb N}
\begin{document}

\title[Fractional Schr\"{o}dinger Equations]
{Semi-classical analysis for Fractional Schr\"{o}dinger Equations with fast decaying potentials}

 \author{Xiaoming An,\,\,\, Lipeng Duan* \,\,\,and \,\,\,Yanfang  Peng }

 \address{School of Mathematics and Statistics\, \&\, Guizhou University of Finance and Economics, Guiyang, 550025, P. R. China}
\email{651547603@qq.com}

\address{School of Mathematics and Statistics\, \&\, Hubei Key Laboratory of Mathematical Sciences, Central China
Normal University, Wuhan, 430079, P. R. China }

\email{ahudlp@sina.com}

\address{School of mathematical sciences\, \&\,  Guizhou Normal University, Guiyang, 550025, P. R. China}

\email{pyfang2005@sina.com}

\begin{abstract}
We study the following fractional Schr\"{o}dinger equation
\begin{equation*}\label{eq0.1}
\epsilon^{2s}(-\Delta)^s u + V(x)u = |u|^{p - 2}u, \,\,x\in\,\,\mathbb{R}^N,
\end{equation*}
 where $s\in (0,\,1)$, $N>2s$, $p>1$ is subcritical and $V(x)$ is a nonnegative continuous potential. We use penalized technique and variational methods to show that the problem has a family of solutions concentrating at a local minimum of $V(x)$ as $\epsilon\to 0$ provided that $\frac{2s}{N-2s}+2<p<\frac{2N}{N-2s}$. The novelty is that $V$ can decay arbitrarily or even be compactly supported.

{\bf Key words: }  fractional Schr\"{o}dinger; local minimum; compactly supported potential; variational; penalized.
%
\end{abstract}

\maketitle

\section{Introduction and main results}

In this paper, we consider the fractional Schr\"{o}dinger equation
\begin{equation}\label{eq1.1}
\epsilon^{2s}(-\Delta)^s u + V(x)u = |u|^{p - 2}u ,\,\,x\in\,\,\mathbb{R}^N,
\end{equation}
where $N > 2s$, $s\in (0,1)$, $\epsilon > 0$ is a small parameter, $p\in (2, 2^*_s)$, $2^*_s=2N/(N-2s)$, $V(x)\in C(\R^N,[0,+\infty))$ is a continuous nonnegative potential. Problem~\eqref{eq1.1}  arise from the study of time-independent waves $\psi(x,t) = e^{-iEt/\epsilon}u(x)$ of the following nonlinear fractional Schr\"{o}dinger equation
\begin{equation}\label{eqNLFS}
i\epsilon\frac{\partial{\psi}}{{\partial t}} = \epsilon^{2s}(-\Delta)^s{\psi} + U(x)\psi - f(\psi)\ x\in \mathbb{R}^N.
\end{equation}
Equation  \eqref{eq1.1} has a wide application in Physics, for example, the Einstein's theory of relativity, phase transition, conservation laws and fractional quantum mechanic, see \cite{2,3,4} and the references therein for more details. The parameter $\epsilon$ is the adimensionalised Planck constant; when $\epsilon$ goes to $0$, the quantum physics will shift to the classical physics, in which case the analysis for \eqref{eq1.1} is called semiclassical analysis. In the local case $s=1$, the existence and concentration of solutions for equation \eqref{eq1.1} have been studied extensively, see  \cite{{5},{6},{7},{8},{9},{10},{12},{13},{14},{15}} and their references therein for example.

In the nonlocal case $0<s<1$, to our best knowledge, there are few results on existence and concentration phenomena for \eqref{eq1.1}. Actually,  due to  the nonlocal effect of $(-\Delta)^s$,  the techniques created for local case can not be applied easily, see \cite{19,20} for example. Considering
the non-vanishing case, i.e., $\inf_{x\in \mathbb{R}^N}V(x) > 0$, Alves et al. in \cite{100} used the penalized method developed by Del Pino et al  in \cite{10} and extension technical  developed by Caffarelli et al. in \cite{24} to construct a family of positive solutions that concentrate at a local minimum of $V$ as $\epsilon\to 0$. Also in the non-vanishing case, references \cite{{25},{26},{27}} proved that the concentration phenomena occurs at the nondegenerate critical points of $V$ by using a reduction method. For the other results of \eqref{eq1.1} with non-vanishing potentials, we would like to refer the readers to \cite{19,20,J.Davila-M.del Pino-J.Wei-2014,V.Ambrosio-2019}.

Throughout all the works before, an assumption that $\inf_{\R^N}V>0$ was required. The study of the case that $V>0$ but $\inf_{\R^N}V=0\, ( \liminf_{|x|\to\infty}V(x)=0$) was initiated by X.  An et al. in \cite{APX}, where it was proved by constructing a new penalized function  that equation \eqref{eq1.1} has a family of positive solutions concentrating at a positive local minimum of $V$ if $\liminf_{|x|\to\infty}V(x)|x|^{2s}>0$.

In this paper, we try to solve the question of existence and concentration for more general potentials including fast decaying potentials, i.e. potentials for which
$$
\liminf_{|x|\to\infty}V(x)|x|^{2s}=0.
$$
A special case of a fast decaying potential is a potential $V$ that vanishes identically, i.e., $V$ is compactly supported.

In order to state our main result, we need to give some notations and assumptions. For  $s\in(0,1)$, the fractional Sobolev space $H^s(\mathbb{R}^N)$ is defined as
$$
H^s(\mathbb{R}^N) = \Big\{u\in L^2(\mathbb{R}^N):\frac{u(x) - u(y)}{|x - y|^{N/2 + s}}\in L^2(\mathbb{R}^N\times\mathbb{R}^N)\Big\},
$$
endowed with the norm
$$
\|u\|_{H^s(\mathbb{R}^N)} = \Big(\int_{\mathbb{R}^N}|(-\Delta)^{s/2}u|^2 + u^2\,  {\mathrm{d}}x \Big)^{\frac{1}{2}},
$$
where
$$
\int_{\mathbb{R}^N}|(-\Delta)^{s/2}u|^2{\mathrm{d}}x  = \int_{\mathbb{R}^{2N}}\frac{|u(x) - u(y)|^2}{|x - y|^{N + 2s}}\,{\mathrm{d}}x \,  {\mathrm{d}}y.
$$
Like the classical case, we define the space $\dot{H}^s(\R^N)$ as the completion of $C^{\infty}_c(\R^N)$ under the norm
$$
\|u\|^2 = \int_{\mathbb{R}^N}|(-\Delta)^{s/2}u|^2{\mathrm{d}}x  = \int_{\mathbb{R}^{2N}}\frac{|u(x) - u(y)|^2}{|x - y|^{N + 2s}}\,{\mathrm{d}}x \,  {\mathrm{d}}y.
$$

Generally, the fractional Laplacian  (see \cite{4} for example) is defined as
\begin{align*}
(-\Delta)^s u(x) \, &= \, C  (N,s)P.V.\int_{\mathbb{R}^N}\frac{u(x) - u(y)}{|x - y|^{N + 2s}}\,{\mathrm{d}}y
\\[2mm]
\, & =\,   C(N,s)\lim_{\epsilon\to 0}\int_{\mathbb{R}^N          \backslash        B_{\epsilon}(x)      }\frac{u(x) - u(y)}{|x - y|^{N + 2s}}\,{\mathrm{d}}y.
\end{align*}
For the sake of simplicity, we define for every $u\in \dot{H}^s(\R^N)$ the fractional $(-\Delta)^s$ as
$$
(-\Delta)^s u(x) = \int_{\mathbb{R}^N}\frac{u(x) - u(y)}{|x - y|^{N + 2s}}\,{\mathrm{d}}y.
$$
Our solutions will be found in the following weighted fractional Sobolev  space:
$$
\mathcal{D}^s_{V,\epsilon}(\mathbb{R}^N) = \left\{u\in \dot{H}^s(\R^N):\,\,u\in L^2(\mathbb{R}^N,V(x)\,{\mathrm{d}}x)\right\},
$$
endowed with the norm
$$
\|u\|_{\mathcal{D}^s_{V,\epsilon}(\mathbb{R}^N)} = \Big(\int_{\mathbb{R}^N}\epsilon^{2s}|(-\Delta)^{s/2}u|^2 + Vu^2\,{\mathrm{d}}x \Big)^{\frac{1}{2}}.
$$
For the potential term $V$, we assume that $V\in C\big(\mathbb{R}^N,[0,\infty)\big)$ and

$(\mathcal{A})$  there exist open bounded sets $\Lambda\subset\subset U$ with smooth boundaries $\partial\Lambda,\ \partial U$, such that
\begin{equation}\label{{Aeq1.3}}
0 < \lambda = \inf_{\Lambda}V < \inf_{U\backslash\Lambda}V.
\end{equation}
Without loss of generality, we assume that $0\in\Lambda$.

Now, with the notations and assumptions above at hand, we are in a position to state our main result:

\begin{theorem}\label{th1.1}
Let $N > 2s$, $s\in(0,1)$, $p\in (2 + \frac{2s}{N - 2s}, 2^*_s)$ and $V\in C\big(\mathbb{R}^N,[0,\infty)\big)$ satisfy the assumption $(\mathcal{A})$.
 Then there exists an $\epsilon_0>0$ such that problem \eqref{eq1.1} has a  positive solution $u_{\epsilon}\in \mathcal{D}^s_{V,\epsilon}(\mathbb{R}^N)$ if $\epsilon\in(0,\epsilon_0)$. Moreover, there exist  an $\alpha\in (2s/(p - 2),N - 2s)$ and a family of points $\{x_{\epsilon}\in \overline{\Lambda}:\epsilon\in(0,\epsilon_0)\}$ such that $u_{\epsilon}(x_{\epsilon}) = \sup_{x\in \Lambda}u_{\epsilon}(x)$,
 $$
 \lim_{\epsilon\to 0}V(x_{\epsilon}) = \min_{\Lambda}V(x)
 $$
 and
\begin{align*}
u_{\epsilon}(x)\le \frac{C\epsilon^{\alpha}}{\epsilon^{\alpha} + |x - x_{\epsilon}|^{\alpha}},
\end{align*}
where $C$ is positive constant.
\end{theorem}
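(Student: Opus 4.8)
The plan is to follow the penalization scheme of Del Pino--Felmer as adapted to the fractional setting in \cite{100} and \cite{APX}, but with a penalization that is strong enough to compensate for the fast (or even compactly supported) decay of $V$. First I would rescale $x\mapsto \epsilon x$ so that \eqref{eq1.1} becomes $(-\Delta)^s u + V(\epsilon x)u = |u|^{p-2}u$, and replace the nonlinearity outside the dilated set $\Lambda_\epsilon=\epsilon^{-1}\Lambda$ by a truncated one: set $g_\epsilon(x,t)=\chi_{\Lambda_\epsilon}(x)\,t_+^{p-1}+(1-\chi_{\Lambda_\epsilon}(x))\min\{t_+^{p-1},\,\kappa\, h_\epsilon(x) t_+\}$, where $h_\epsilon$ is a carefully chosen positive weight (a dilated power-type comparison function $\sim (\epsilon^\alpha+|x|^\alpha)^{-(p-2)}$ with $\alpha\in(2s/(p-2),N-2s)$) and $\kappa$ is small. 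The exponent condition $p>2+\tfrac{2s}{N-2s}$ is exactly what makes such an $\alpha$ exist and what guarantees the comparison function lies in $\dot H^s$; this is the structural reason the hypothesis appears. The modified functional $J_\epsilon$ on $\mathcal{D}^s_{V,\epsilon}$ then has a mountain-pass geometry (the subquadratic-at-infinity penalized term restores coercivity where $V$ degenerates), and one obtains a Palais--Smale sequence at the mountain-pass level $c_\epsilon$.

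Next I would establish compactness. Because $V$ may vanish at infinity, the embedding $\mathcal{D}^s_{V,\epsilon}\hookrightarrow L^p$ is not compact globally, so the key is a concentration-compactness/decay argument: show that any PS sequence for $J_\epsilon$ either converges or loses mass only inside $\Lambda_\epsilon$, where $V\ge\lambda>0$ and the standard fractional Sobolev compactness applies. The energy estimate $c_\epsilon\le \epsilon^N(c_{\lambda}+o(1))$, with $c_\lambda$ the ground-state level of the limit autonomous problem $(-\Delta)^s w+\lambda w=w^{p-1}$, combined with the gap condition $\lambda<\inf_{U\setminus\Lambda}V$, forces concentration strictly inside $\Lambda$ and rules out splitting. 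This yields a nontrivial critical point $u_\epsilon$ of $J_\epsilon$; a maximum principle (positivity of the fractional operator) gives $u_\epsilon>0$, and the point $x_\epsilon$ where $u_\epsilon$ attains its max over $\Lambda$ satisfies $V(x_\epsilon)\to\min_\Lambda V$ by the same energy comparison.

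The final, and I expect most delicate, step is the pointwise decay estimate $u_\epsilon(x)\le C\epsilon^\alpha(\epsilon^\alpha+|x-x_\epsilon|^\alpha)^{-1}$, which must also justify \emph{a posteriori} that $u_\epsilon$ actually solves the original equation \eqref{eq1.1} (i.e. the penalization is never active) once $\epsilon$ is small. The mechanism: first obtain a uniform $L^\infty$ bound and a crude decay $u_\epsilon(x)\to0$ as $|x|\to\infty$ via a Moser/De Giorgi iteration for the fractional operator; then build the barrier $W_\epsilon(x)=C\epsilon^\alpha(\epsilon^\alpha+|x-x_\epsilon|^\alpha)^{-1}$ and verify that $(-\Delta)^s W_\epsilon \gtrsim |x-x_\epsilon|^{-2s}W_\epsilon$ at scale $|x-x_\epsilon|\gtrsim\epsilon$ — the choice $\alpha<N-2s$ makes $W_\epsilon$ a genuine supersolution of the linear inequality $(-\Delta)^s v + \epsilon^{-2s}c\,|x|^{-2s}v\ge 0$ governing $u_\epsilon$ outside $\Lambda_\epsilon$, while $\alpha>2s/(p-2)$ ensures the penalized nonlinear term is dominated. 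A comparison principle for $(-\Delta)^s$ on the exterior domain, applied after rescaling back, then gives $u_\epsilon\le W_\epsilon$ everywhere outside $\Lambda$, and inside $\Lambda$ the bound is trivial from the $L^\infty$ estimate. The main obstacle is making the comparison argument robust to the nonlocal tail: one must control the contribution of $u_\epsilon$ on $\Lambda_\epsilon$ to $(-\Delta)^s u_\epsilon(x)$ at far-away points $x$, which is handled by the already-established crude decay together with the fact that $\Lambda_\epsilon$ has measure $O(\epsilon^{-N})$ with $u_\epsilon$ of order $O(1)$ there, contributing a term of the right (subordinate) order in the barrier inequality.
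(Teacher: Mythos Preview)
Your overall architecture---penalize outside $\Lambda$, run mountain pass, prove concentration by energy comparison, then build a power-type barrier and compare---matches the paper. But you are missing the one ingredient that makes the scheme work when $V$ is fast decaying or compactly supported: the fractional Hardy inequality
\[
\int_{\mathbb{R}^N}\frac{|u|^2}{|x|^{2s}}\,\mathrm{d}x \le C_{N,s}\int_{\mathbb{R}^N}|(-\Delta)^{s/2}u|^2\,\mathrm{d}x.
\]
The paper chooses the penalization weight $\mathcal{P}_\epsilon(x)=\epsilon^{2s+2\kappa}|x|^{-(2s+\kappa)}\chi_{\mathbb{R}^N\setminus\Lambda}$ precisely so that $\int \mathcal{P}_\epsilon u^2$ is dominated by a small multiple of $\|(-\Delta)^{s/2}u\|_2^2$ via Hardy. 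This single mechanism is used three times: (i) to make the penalized functional $C^1$ on $\mathcal{D}^s_{V,\epsilon}$ and to prove the Palais--Smale condition (your claim that ``mass is lost only inside $\Lambda_\epsilon$'' has no teeth without it, since when $V$ is compactly supported the norm controls nothing at infinity); (ii) in the lower energy estimate, to control the $L^2$ tail of the rescaled solutions outside large balls (see the estimate \eqref{eq3.3} in the paper); and (iii) to make the comparison principle valid at the end---one tests the difference $W_\epsilon - u_\epsilon$ against its negative part and needs $(-\Delta)^s-\widetilde{P}_\epsilon$ to be a positive form, which is again Hardy.

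Two smaller points. First, the linear inequality you write for the barrier has the wrong sign: the linearized penalized equation on the exterior is $(-\Delta)^s v + V_\epsilon v - \widetilde{P}_\epsilon v \le 0$, so the relevant operator is $(-\Delta)^s - c|x|^{-2s}$, not $(-\Delta)^s + c|x|^{-2s}$; showing the barrier is a supersolution requires the lower bound $(-\Delta)^s(|x|^{-\alpha})\ge C_\alpha|x|^{-\alpha-2s}$ for $0<\alpha<N-2s$, which the paper verifies by hand. Second, the ``nonlocal tail'' obstacle you highlight is not the real issue: once the barrier dominates $u_\epsilon$ on $B_R$, the fractional comparison is carried out by testing against $(\overline{U}-v_\epsilon)_-$ globally, and the cross terms have a favourable sign; no separate tail estimate is needed. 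The genuine obstacle is the absence of a maximum principle when $V\equiv 0$ at infinity, and that is exactly what Hardy repairs.
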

Particularly, we solve the fractional version  Ambrosetti-Malchiodi question(\cite{AA}) completely, which in the local case $s=1$ have been settled recently (see \cite{BaNa,Byeon-Wang,Cao-Peng,MV}).

Our approach in this work follows the variational penalization scheme in \cite{APX}, which was first introduced in \cite{10}. Naturally, equation \eqref{eq1.1} is the Euler-Lagrange equation of the functional
$$
I_{\epsilon}(u):=\frac{1}{2}\int_{\R^N}(\epsilon^{2s}|(-\Delta)^{s/2}u|^2 + V|u|^2) - \frac{1}{p+1}\int_{\R^N}|u|^p.
$$
However, an elementary (but tedious) calculation shows that when $2<p<2^*_s$ and $V$ vanishes faster than $|x|^{-2s}$, the second integral is not finite in $\mathcal{D}^s_{V,\epsilon}(\R^N)$. del Pino and Felmer provided an effective way to overcome such difficulty: to modify the nonlinear term. According to the idea,  assuming that $V$ satisfies $\liminf_{|x|\to\infty}V(x)|x|^{2s}>0$, in \cite{APX} the penalized problem
$$
\epsilon^{2s}(-\Delta)^su_{\epsilon} + V(x)u_{\epsilon}=\chi_{\Lambda}u^{p-1}_{\epsilon} + \chi_{\R^N\backslash\Lambda}\min\{P_{\epsilon}(x)u_{\epsilon},u^{p-1}_{\epsilon}\}
$$
was considered, where $P_{\epsilon}(x)$ satisfies $P_{\epsilon}(x) = 0$ on $\Lambda$, $P_{\epsilon}(x)>0$ outside $\Lambda$ and $P_{\epsilon}(x)\le \epsilon^{\nu}V(x)$ outside $\Lambda$ with $\nu>0$ is a small parameter. A penalized solution then follows by the fact that the Euler-Lagrange functional corresponding to the penalized problem owns Mountain Pass geometry and is a class of $C^1$. One has then to show that $u^{p-2}_{\epsilon}\le P_{\epsilon}$ outside $\Lambda$. However, when $V$ is compactly supported this approach fails, because one should then have that the solution $u_{\epsilon}$ are compactly supported. In brief, the potential $V$ can not dominated the nonlinear term if it is compactly supported. Our key observation in this paper is that, in order to overcome this difficulty, we can modify the problem by the following fractional Hardy inequality: There exists a positive constant $C_{N,s}$ such that
\begin{equation}\label{eq1.4}
\int_{\R^N}\frac{|u(x)|^2}{|x|^{2s}}\,{\mathrm{d}}x\le C_{N,s}|(-\Delta)^{s/2}|^2_2
\end{equation}
for all $u\in \dot{H}^s(\R^N)$(see \cite{FS})

One difference between equations in local case $s=1$ and nonlocal case $0<s<1$ is the construction of penalized function. In fact, for a smooth function $f\in C^{\infty}_c(\R^N)$, one can not compute $(-\Delta)^sf$ as precisely as $-\Delta f$. Another difference is the energy estimates. The nonlocal effect makes us have to know the global $L^2$-norm information of penalized solution, which in \cite{APX} was given by the assumption $\liminf_{|x|\to 0}V(x)|x|^{2s}>0$. But, in the present paper,  the global $L^2$-norm information of penalized solution will be obtained by the Hardy inequality above (see \eqref{eq3.3} in the proof of Lemma \ref{le3.1} for more details).

\vspace{0.5cm}

  The paper is organized as follows: in Section \ref{s2}, we establish the penalized scheme and obtain a penalized solution $u_{\epsilon}$. In Section \ref{s3} we study the concentration phenomenon of $u_{\epsilon}$. In Section \ref{s4}, we prove the penalized solution $u_{\epsilon}$ solves the origin problem by constructing a special penalized function. In Section \ref{s5}, we give a short proof for the existence and concentration for equation \eqref{eq1.1} with more general nonlinear term.

\vspace{0.5cm}
\section{The penalized problem}\label{s2}

The following inequality exposes the relationship between $H^s(\R^N)$ and the Banach space $L^q(\R^N)$.
\begin{proposition}\label{wpr2.1} ({Fractional version of the Gagliardo$-$Nirenberg inequality \cite{4}})
For every $u\in H^s(\mathbb{R}^N)$,
\begin{align} \label{inequality2.1}
\|u\|_{q} \leq C \|(-\Delta)^{s/2}u\|^{\theta}_{2}\|u\|^{1 -\theta }_{2},
\end{align}
where $q\in [2,2^*_s]$ and $\theta $ satisfies $\frac{\theta}{2^*_s} + \frac{(1 -\theta)}{2} = \frac{1}{q}$.
\end{proposition}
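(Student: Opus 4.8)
The plan is to obtain \eqref{inequality2.1} from the endpoint fractional Sobolev embedding together with an elementary H\"older interpolation. First I would invoke the fractional Sobolev inequality: there is a constant $S=S(N,s)>0$ such that
$$
\|u\|_{2^*_s}\le S\,\|(-\Delta)^{s/2}u\|_2\qquad\text{for all }u\in\dot{H}^s(\R^N),
$$
whose proof (via the Gagliardo seminorm representation of $\|(-\Delta)^{s/2}u\|_2$) is classical and can be found in \cite{4}. For $u\in H^s(\R^N)$ both factors on the right of \eqref{inequality2.1} are finite, so the statement is meaningful. Note also that the two endpoint cases cost nothing: $q=2$ forces $\theta=0$ and \eqref{inequality2.1} is an identity, while $q=2^*_s$ forces $\theta=1$ and \eqref{inequality2.1} is exactly the displayed Sobolev inequality.

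For an interior exponent $q\in(2,2^*_s)$, I would let $\theta\in(0,1)$ be the unique number with $\frac{\theta}{2^*_s}+\frac{1-\theta}{2}=\frac1q$; it exists because $\frac1q$ lies strictly between $\frac1{2^*_s}$ and $\frac12$. Writing $|u|^q=|u|^{\theta q}\,|u|^{(1-\theta)q}$ and applying H\"older's inequality with the exponents $\frac{2^*_s}{\theta q}$ and $\frac{2}{(1-\theta)q}$ — which are conjugate precisely because $\frac{\theta q}{2^*_s}+\frac{(1-\theta)q}{2}=q\cdot\frac1q=1$ by the choice of $\theta$ — gives
$$
\int_{\R^N}|u|^q\,dx\le\Big(\int_{\R^N}|u|^{2^*_s}\,dx\Big)^{\theta q/2^*_s}\Big(\int_{\R^N}|u|^2\,dx\Big)^{(1-\theta)q/2},
$$
that is, $\|u\|_q\le\|u\|_{2^*_s}^{\theta}\,\|u\|_2^{1-\theta}$. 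Combining this with the Sobolev inequality yields $\|u\|_q\le S^{\theta}\|(-\Delta)^{s/2}u\|_2^{\theta}\|u\|_2^{1-\theta}$, and since $\theta\in[0,1]$ one may take $C=\max\{1,S\}$, which is independent of $q$ and $\theta$.

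There is essentially no obstacle here: the content is the fractional Sobolev embedding, which is quoted rather than reproved, and the rest is the standard $L^p$-interpolation argument. The only point deserving a moment's care is the bookkeeping showing that the H\"older exponents produced by the defining relation for $\theta$ are genuinely conjugate and that the resulting constant can be chosen uniformly in $q$; both are immediate from the computation above.
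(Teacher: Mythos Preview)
Your argument is correct: the fractional Sobolev inequality gives the endpoint $q=2^*_s$, the case $q=2$ is trivial, and H\"older interpolation between $L^2$ and $L^{2^*_s}$ handles all intermediate exponents with the bookkeeping you describe. The paper does not actually supply a proof of this proposition; it is stated as a known result with a citation to \cite{4} (the ``Hitchhiker's guide''), so there is nothing to compare against beyond noting that your approach is the standard one used to derive such interpolation inequalities from the critical embedding.
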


By the proposition above, we know that $ H^s(\mathbb{R}^N)$ is continuously  embedded into $L^q(\mathbb{R}^N)$ for $\ q\in [2,2^*_s]$. Moreover, on bounded set, the embedding is compact (see \cite{4}), i.e.,
$$
H^s(\mathbb{R}^N)\subset\subset L^q_{loc}(\mathbb{R}^N)\ \text{compactly, if}\ q\in [1,2^*_s).
$$

Now we are going to modify the origin problem \eqref{eq1.1}.
According to the fractional Hardy inequality \eqref{eq1.4}, we choose a family of  penalized potentials $\mathcal{P}_{\epsilon}\in L^{\infty}(\mathbb{R}^N,[0,\infty))$ for $\epsilon > 0$ small in such a way that
\begin{eqnarray}\label{eq2.1}
\begin{split}
\mathcal{P}_{\epsilon}(x) = 0\ \text{for all}\ x\in\Lambda
\end{split}
\end{eqnarray}
{and}
\begin{align}
\ \lim\limits_{\epsilon \to 0}\sup_{\mathbb{R}^N\backslash \Lambda} \mathcal{P}_{\epsilon}(x)\epsilon^{-(2s + 3\kappa/2)}|x|^{2s + \kappa} = 0,
\end{align}
where $\kappa>0$ is a small parameter. Noting that by \eqref{eq1.4}, when $\epsilon>0$ is small enough, it holds
\begin{align*}
\int_{A}\mathcal{P}_{\epsilon}(x)|u|^2\le C_{N,s}\frac{\epsilon^{2s +\frac{3\kappa}{2}}}{\inf_{x\in(\R^N\backslash\Lambda)\cap A}{|x|^{\kappa}}}\int_{\R^N}|(-\Delta)^{s/2}u|^2
\end{align*}
where $C_{N,s}$ is the constant in \eqref{eq1.4}. This type of estimate plays a key role in the paper. Now we give the penalized problem according to the choice of $\mathcal{P}_{\epsilon}$:
\begin{equation}\label{eq2.2}
\epsilon^{2s}(-\Delta)^su + Vu = \chi_{\Lambda}(x)u^{p - 1}_{+} + \chi_{\mathbb{R}^N\backslash \Lambda}(x)\min(u^{p - 1}_{+}, \mathcal{P}_{\epsilon}(x)u_+).
\end{equation}
It is easy to check that if a solution $u_{\epsilon}$ of \eqref{eq2.2} satisfies
$$
u_{\epsilon}\le \mathcal{P}_{\epsilon}\ \ \text{on}\ \R^N\backslash\Lambda,
$$
then $u_{\epsilon}$ is a solution of \eqref{eq1.1}.

For simplicity, we define
$$
g_{\epsilon}(x,s): = \chi_{\Lambda}(x)s^{p - 1}_{+} + \chi_{\mathbb{R}^N\backslash \Lambda}(x)\min(s^{p - 1}_{+}, \mathcal{P}_{\epsilon}(x)s_+),
$$
$$G_{\epsilon}(x,t) = \int_{0}^{t}g_{\epsilon}(x,s) {\mathrm{d}}s,$$
$$
\mathbf{g}_{\epsilon}(u)(x) = g_{\epsilon}(x,u(x))
$$
and
$$
 \mathbf{G}_{\epsilon}(u)(x) = G_{\epsilon}(x,u(x)).
$$
Then the Euler-Lagrange functional of \eqref{eq2.2} 
is
$$
\mathbf{J}_{\epsilon}(u) = \frac{1}{2}\int_{\mathbb{R}^N}\big(\epsilon^{2s}|(-\Delta)^{s/2} u|^2 + V(x)|u|^2\big) - \int_{\mathbb{R}^N}\mathbf{G}_{\epsilon}(u).
$$

By  the Hardy inequality \eqref{eq1.4},  we know that $\mathbf{J}_{\epsilon}$ is well-defined on $\mathcal{D}^s_{V,\epsilon}(\R^N)$. In order to obtain a critical point to $\mathbf{J}_{\epsilon}$ via variational method, we need to verify that $\mathbf{J}_{\epsilon}$ is $C^1$ and satisfies the (P.S.) condition.
\begin{lemma}\label{le2.2}
$\mathbf{J}_{\epsilon}:\mathcal{D}^s_{V,\epsilon}(\R^N)\to \R$ is $C^1$ and satisfies (P.S.) condition.
\end{lemma}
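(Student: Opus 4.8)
The plan is to verify the two assertions separately, using the fact that, by the fractional Hardy inequality \eqref{eq1.4} and the choice \eqref{eq2.1} of the penalized potentials $\mathcal{P}_\epsilon$, the nonlinear term is subordinate to the norm of $\mathcal{D}^s_{V,\epsilon}(\R^N)$ and in fact is, in a suitable sense, compactly subordinate. First, for the $C^1$ regularity: one shows that $u\mapsto \int_{\R^N}\mathbf{G}_\epsilon(u)$ is Fr\'echet differentiable with continuous derivative $v\mapsto \int_{\R^N}\mathbf{g}_\epsilon(u)v$. The growth estimate $0\le g_\epsilon(x,t)\le \min\{t_+^{p-1},\mathcal{P}_\epsilon(x)t_+\}\le \mathcal{P}_\epsilon(x)^{1/(2-p')}\cdots$ is the point where one must be careful: on $\Lambda$ one has the pure power $t^{p-1}$ and on $\R^N\setminus\Lambda$ one has the linear-in-$t$ bound $\mathcal{P}_\epsilon(x)t_+$. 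On $\Lambda$ (bounded) the embedding $\dot H^s \hookrightarrow L^p(\Lambda)$ together with $p<2^*_s$ handles the power term by the standard Nemytskii-operator argument; on $\R^N\setminus\Lambda$ the bound $g_\epsilon(x,t)\le \mathcal{P}_\epsilon(x)t_+$ together with the Hardy-type inequality $\int_A\mathcal{P}_\epsilon|u|^2\le C\,\epsilon^{2s+3\kappa/2}(\inf_A|x|^\kappa)^{-1}\|(-\Delta)^{s/2}u\|_2^2$ displayed before \eqref{eq2.2} gives continuity and differentiability of that part of the functional directly from the definition of the norm. Combining the two pieces and invoking the mean value theorem and dominated convergence yields that $\mathbf{J}_\epsilon\in C^1(\mathcal{D}^s_{V,\epsilon}(\R^N),\R)$.

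For the Palais--Smale condition, let $\{u_n\}\subset\mathcal{D}^s_{V,\epsilon}(\R^N)$ satisfy $\mathbf{J}_\epsilon(u_n)\to c$ and $\mathbf{J}_\epsilon'(u_n)\to 0$. The first step is boundedness: testing with $u_n$ and using $0\le G_\epsilon(x,t)\le \tfrac1p t\,g_\epsilon(x,t)$ on $\Lambda$ and $0\le G_\epsilon(x,t)\le\tfrac12 t\,g_\epsilon(x,t)$ (indeed $\le\tfrac12\mathcal{P}_\epsilon t^2$) on $\R^N\setminus\Lambda$, one forms $\mathbf{J}_\epsilon(u_n)-\tfrac1p\langle\mathbf{J}_\epsilon'(u_n),u_n\rangle$; the $\R^N\setminus\Lambda$ contribution is controlled by the Hardy estimate and absorbed into the norm once $\epsilon$ is small, leaving $(\tfrac12-\tfrac1p)\|u_n\|_{\mathcal{D}^s_{V,\epsilon}}^2\le c+o(1)+o(\|u_n\|_{\mathcal{D}^s_{V,\epsilon}})$, hence $\{u_n\}$ is bounded. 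Passing to a weakly convergent subsequence $u_n\wto u$, one then shows strong convergence: write $\|u_n-u\|_{\mathcal{D}^s_{V,\epsilon}}^2=\langle\mathbf{J}_\epsilon'(u_n)-\mathbf{J}_\epsilon'(u),u_n-u\rangle+\int_{\R^N}(\mathbf{g}_\epsilon(u_n)-\mathbf{g}_\epsilon(u))(u_n-u)$; the first term is $o(1)$, and the integral splits as before. On $\Lambda$, the compact embedding $\dot H^s\hookrightarrow\hookrightarrow L^p_{loc}$ gives $u_n\to u$ in $L^p(\Lambda)$ and hence that part tends to $0$; on $\R^N\setminus\Lambda$, the Hardy estimate bounds $\int_{\R^N\setminus\Lambda}\mathcal{P}_\epsilon|u_n-u|^2$ by a small constant (small because of the $\epsilon^{2s+3\kappa/2}$ factor) times $\|u_n-u\|_{\mathcal{D}^s_{V,\epsilon}}^2$, which can be absorbed. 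Therefore $u_n\to u$ strongly and (P.S.) holds.

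The main obstacle is the non-compact tail on $\R^N\setminus\Lambda$: since $V$ may decay arbitrarily fast (even vanish), the weighted $L^2$-norm gives no control there, and the only leverage is the smallness built into $\mathcal{P}_\epsilon$ via \eqref{eq2.1} combined with the fractional Hardy inequality \eqref{eq1.4}. The delicate point is to make the absorption argument uniform — one must fix $\epsilon_0$ so that the coefficient $C_{N,s}\,\epsilon^{2s+3\kappa/2}(\inf_{\R^N\setminus\Lambda}|x|)^{-\kappa}$ (or its analogue over the relevant region, recalling $0\in\Lambda$ so $\dist(\R^N\setminus\Lambda,0)>0$) is, say, $<\tfrac14(\tfrac12-\tfrac1p)$ for all $\epsilon\in(0,\epsilon_0)$ — and to check that every place where we discard an $\R^N\setminus\Lambda$ integral we are discarding something nonnegative, so no sign is lost. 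Once this bookkeeping is done, the interior estimates are entirely classical and the lemma follows.
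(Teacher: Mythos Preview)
Your proposal is correct and follows essentially the same scheme as the paper: split the nonlinear term into the $\Lambda$-piece (handled by the subcritical power and local Sobolev/compact embeddings) and the $\R^N\setminus\Lambda$-piece (handled by the bound $g_\epsilon(x,t)\le\mathcal{P}_\epsilon(x)t_+$ together with the fractional Hardy inequality \eqref{eq1.4}), and then run the standard $\mathbf{J}_\epsilon(u_n)-\tfrac1p\langle\mathbf{J}'_\epsilon(u_n),u_n\rangle$ and $\|u_n-u\|^2=\langle\mathbf{J}'_\epsilon(u_n)-\mathbf{J}'_\epsilon(u),u_n-u\rangle+\cdots$ computations.

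The only noteworthy difference is in how the exterior tail is disposed of in the compactness step. You absorb: you bound $\int_{\R^N\setminus\Lambda}\mathcal{P}_\epsilon|u_n-u|^2$ by $C_{N,s}\,\epsilon^{3\kappa/2}\bigl(\inf_{\R^N\setminus\Lambda}|x|\bigr)^{-\kappa}\|u_n-u\|^2_{\mathcal{D}^s_{V,\epsilon}}$ and move it to the left-hand side, which requires $\epsilon$ small. The paper instead takes an arbitrary $\sigma>0$, chooses $R$ large so that $\int_{\R^N\setminus B_R}\mathcal{P}_\epsilon|u_n|^2\le C R^{-\kappa}<\sigma$ (using the extra $|x|^{-\kappa}$ decay built into $\mathcal{P}_\epsilon$), handles $B_R$ by local compactness, and lets $\sigma\to 0$. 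Both arguments are valid; the paper's version has the slight advantage of working for each fixed $\epsilon$ without an absorption threshold, while yours is a line shorter. Either way the lemma follows.
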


\begin{proof}
We only need to show that the nonlinear term
$$
\mathcal{I}_{\epsilon}(u) = \int_{\R^N}\mathbf{G}_{\epsilon}(u)
$$
is $C^1$ since the other terms are obvious. For every $\varphi\in C^{\infty}_c(\R^N)$ and $0<|t|<1$, we have
$$
|\mathbf{G}_{\epsilon}(u + t\varphi) - \mathbf{G}_{\epsilon}(u)|/t\leq C\Big((|u|^p + |\varphi|^p)\chi_{\Lambda} + \mathcal{P}_{\epsilon}(|u|^2 + |\varphi|^2)\Big)\in L^1(\mathbb{R}^N).
$$
The existence of first order Gateaux derivative then follows  by Dominated Convergence Theorems.

Let $(u_n),\ u\in \mathcal{D}^s_{V,\epsilon}(\mathbb{R}^N)$ satisfy  $u_n\to u\in \mathcal{D}^s_{V,\epsilon}(\mathbb{R}^N)$. By Proposition \ref{wpr2.1}, the construction of $\mathcal{P}_{\epsilon}$, \eqref{eq1.4} and Dominated Convergence Theorem, we deduce that for every $\varphi\in H^{s}_{V,\epsilon}(\mathbb{\mathbb{R}^N})$ with $\|\varphi\|_{H^{s}_{V,\epsilon}(\mathbb{R}^N)} \leq 1$,
\begin{align*}
 &\quad\quad|\langle\mathcal{I}'_{\epsilon}(u_n) - \mathcal{I}'_{\epsilon}(u),\varphi\rangle|&\\
&\leq C\|u_n - u\|^p_{H^{s}_{V,\epsilon}(\mathbb{R}^N)} + \int_{\mathbb{R}^N\backslash{\Lambda}}\big|\min\{\mathcal{P}_{\epsilon},(u_n)^{p - 2}_+\}u_n\varphi - \min\{\mathcal{P}_{\epsilon},u^{p - 2}_+\}u\varphi \big| \,{\mathrm{d}}x
\\[2mm]
 & \leq  o_n(1) + \int_{\mathbb{R}^N\backslash{\Lambda}}\big|\min\{\mathcal{P}_{\epsilon},(u_n)^{p - 2}_+\}(u_n - u)\varphi\big|
 \\[2mm]
 &\ \ \ \ \ \ \ \ \ \ \ \ \ \ \qquad + \big|\min\{\mathcal{P}_{\epsilon},(u_n)^{p - 2}_+\} - \min\{\mathcal{P}_{\epsilon},u^{p - 2}_+\}\big|u\varphi \,{\mathrm{d}}x
 \\[2mm]
 & \leq o_n(1) + \int_{\mathbb{R}^N\backslash{\Lambda}}\big|\min\{\mathcal{P}_{\epsilon},(u_n)^{p - 2}_+\} - \min\{\mathcal{P}_{\epsilon},u^{p - 2}_+\}\big||u|^2\,{\mathrm{d}}x
 \\[2mm]
 & = o_n(1),
\end{align*}
which says that $\mathbf{J}_{\epsilon}$ is a class of $C^1$.

 Next we show that $\mathbf{J}_{\epsilon}$ satisfies (P.S.) condition, i.e., to prove that any sequence $\{u_n\}\subset  \mathcal{D}^s_{V,\epsilon}(\R^N)$ satisfying
$$
\mathbf{J}_{\epsilon}(u_n)\to c,\ \ \text{and}\ \ J'_{\epsilon}(u_n)\to 0
$$
is relatively compact.

It is standard to verify using the fact $p>2$ and the construction of $\mathcal{P}_{\epsilon}$ that $\{u_n\}$ is bounded in $\mathcal{D}^s_{V,\epsilon}(\R^N)$. By Proposition \ref{wpr2.1}, one has thus $u_n\to u$ in $L^{p}_{loc}(\R^N)$.

On the other hand, one has, for every $\sigma>0$, by the fractional Hardy inequality \eqref{eq1.4}, for $R>0$ large enough,
\begin{align*}
\int_{\R^N\backslash B_R(0)}\mathcal{P}_{\epsilon}(x)|u_n|^2 \,{\mathrm{d}}x  &= \sup_{x\in\R^N\backslash B_R(0)}(C^{-1}_{N,s}\mathcal{P}_{\epsilon}(x)|x|^{2s})\, C_{N,s}\, \int_{\R^N\backslash B_R(0)}\frac{|u_n|^2}{|x|^{2s}}\,{\mathrm{d}}x
\\[1mm]
&\le \sup_{x\in\R^N\backslash B_R(0)}(\mathcal{P}_{\epsilon}(x)|x|^{2s})|(-\Delta)^{s/2} u_n|^2_2\\[1mm]
&\le \frac{C}{R^{\kappa}}
< \sigma.
\end{align*}
Hence
\begin{align*}
  \limsup_{n\to\infty}\|u_n - u\|^2_{\mathcal{D}^s_{V,\epsilon}(\mathbb{R}^N)}
 &= \limsup\Big(\langle J'_{\epsilon}(u_n) - J'_{\epsilon}(u),u_n - u \rangle\\
 &\quad + \int_{\mathbb{R}^N}(\mathbf{g}_{\epsilon}(u_n) - \mathbf{g}_{\epsilon}(u))(u
_n(x) - u(x))\,{\mathrm{d}}x\Big)\\
  &\le \limsup_{n\to\infty}\Big(\int_{\mathbb{R}^N\backslash B_R(0)}\mathcal{P}_{\epsilon}(|u_n| + |u|)^2\,{\mathrm{d}}x\Big)^\frac{1}{2}\\
  &\le\sigma.
\end{align*}
Since $\sigma>0$ is arbitrary, this proves the lemma.

\end{proof}

It is standard to show that $\mathbf{J}_{\epsilon}$ owns Mountain Pass geometry. Hence, from Lemma \ref{le2.2}, we can use the min-max procedure in \cite{MW} to find a critical point for the functional $\mathbf{J}_{\epsilon}$.

\begin{lemma}\label{le2.3}
The Mountain Pass value
$$
c_{\epsilon}: = \inf_{\gamma\in\Gamma_{\epsilon}}\max_{t\in[0,1]}\mathbf{J}_{\epsilon}(\gamma(t))
$$
can be achieved by a positive function $u_{\epsilon}$ which is $C^{1,\gamma}$ for some $\gamma\in(0,1)$ and is a solution to the penalized problem \eqref{eq2.2},
where
$$
\Gamma_{\epsilon}:=\big\{\gamma\in C\big(\, [0,1],\, \mathcal{D}^s_{V,\epsilon}(\R^N)\, \big)|\gamma(0) = 0, \mathbf{J}_{\epsilon}(\gamma(1))<0\big\}.
$$
\end{lemma}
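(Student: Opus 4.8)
The plan is to establish the Mountain Pass geometry for $\mathbf{J}_\epsilon$ and then invoke the Mountain Pass theorem (as available in \cite{MW}) together with Lemma \ref{le2.2} to obtain a nontrivial critical point, which will be the desired $u_\epsilon$; positivity and regularity then follow by maximum-principle and bootstrap arguments. First I would check the geometric conditions. Since $\mathbf{g}_\epsilon(x,t)=0$ for $t\le 0$ and, on $\Lambda$, $G_\epsilon(x,t)=\frac{1}{p}t_+^p$ while outside $\Lambda$ one has $G_\epsilon(x,t)\le \frac{1}{p}t_+^p$ pointwise, the functional satisfies $\mathbf{J}_\epsilon(u)\ge \frac12\|u\|_{\mathcal{D}^s_{V,\epsilon}}^2 - C\int_{\R^N}|u|^p$; by Proposition \ref{wpr2.1} applied on the bounded set $\Lambda$ (for the $\chi_\Lambda$ part) and by the Hardy-type bound $\int_{\R^N\setminus\Lambda}\mathcal{P}_\epsilon|u|^2\le \frac{C\epsilon^{2s+3\kappa/2}}{\inf|x|^\kappa}\|(-\Delta)^{s/2}u\|_2^2$ (for the penalized part), the nonlinear term is controlled by $\|u\|_{\mathcal{D}^s_{V,\epsilon}}^p$ plus a small multiple of $\|u\|_{\mathcal{D}^s_{V,\epsilon}}^2$, so there are $\rho,\delta>0$ with $\mathbf{J}_\epsilon(u)\ge\delta$ whenever $\|u\|_{\mathcal{D}^s_{V,\epsilon}}=\rho$. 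For the second geometric condition, fix any $\psi\in C^\infty_c(\Lambda)$ with $\psi>0$ somewhere; then for $t>0$ large, $\mathbf{J}_\epsilon(t\psi)=\frac{t^2}{2}\|\psi\|_{\mathcal{D}^s_{V,\epsilon}}^2 - \frac{t^p}{p}\int_\Lambda \psi_+^p\to -\infty$ because $p>2$ and the last integral is strictly positive; this gives an admissible path in $\Gamma_\epsilon$, so $\Gamma_\epsilon\ne\emptyset$ and $c_\epsilon\ge\delta>0$.

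Next, with Lemma \ref{le2.2} supplying the $C^1$ and (P.S.) properties, the Mountain Pass theorem yields a critical point $u_\epsilon\in\mathcal{D}^s_{V,\epsilon}(\R^N)$ with $\mathbf{J}_\epsilon(u_\epsilon)=c_\epsilon>0$, hence $u_\epsilon\not\equiv 0$, and $\mathbf{J}_\epsilon'(u_\epsilon)=0$, i.e.\ $u_\epsilon$ solves \eqref{eq2.2}. To see $u_\epsilon\ge 0$, I would test the equation with $u_\epsilon^-:=\max\{-u_\epsilon,0\}$: since $\mathbf{g}_\epsilon(x,t)$ vanishes for $t\le0$, the right-hand side contributes nothing, and using the standard inequality for the fractional Laplacian, $\langle(-\Delta)^s u_\epsilon,u_\epsilon^-\rangle\le -\|(-\Delta)^{s/2}u_\epsilon^-\|_2^2$ (more precisely $\iint \frac{(u_\epsilon(x)-u_\epsilon(y))(u_\epsilon^-(x)-u_\epsilon^-(y))}{|x-y|^{N+2s}}\le -\|u_\epsilon^-\|^2$), one gets $\epsilon^{2s}\|u_\epsilon^-\|^2 + \int V|u_\epsilon^-|^2\le 0$, forcing $u_\epsilon^-\equiv 0$. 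Strict positivity $u_\epsilon>0$ then follows from the strong maximum principle for $(-\Delta)^s$ once we know $u_\epsilon\not\equiv0$.

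For the regularity claim that $u_\epsilon\in C^{1,\gamma}$, I would argue by bootstrap. The right-hand side $\mathbf{g}_\epsilon(u_\epsilon)$ is controlled by $u_\epsilon^{p-1}\chi_\Lambda + \mathcal{P}_\epsilon u_\epsilon\chi_{\R^N\setminus\Lambda}$, and since $u_\epsilon\in L^{2^*_s}$ and $\mathcal{P}_\epsilon\in L^\infty$, the datum lies in $L^q_{loc}$ for a suitable $q$; applying the known $L^q$-to-$C^{0,\gamma}$ and Schauder-type estimates for the fractional Laplacian (as in \cite{100,4}), iterating raises the integrability/Hölder exponent until one reaches $C^{0,\gamma}$, and then $C^{1,\gamma}$ locally. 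The main obstacle, as usual in penalization arguments, is verifying the (P.S.) condition uniformly enough — but that has already been done in Lemma \ref{le2.2}, so here the only genuinely delicate point is making the geometric estimate for the nonlinear term near the origin rigorous, i.e.\ absorbing $\int_{\R^N\setminus\Lambda}\mathcal{P}_\epsilon|u|^2$ into the quadratic part via \eqref{eq1.4}; this is handled by choosing $\epsilon$ small and using the uniform smallness $\sup_{\R^N\setminus\Lambda}\mathcal{P}_\epsilon(x)|x|^{2s}\le C\epsilon^{2s+3\kappa/2}$ from \eqref{eq2.1}, which makes the coefficient in front of $\|(-\Delta)^{s/2}u\|_2^2$ as small as we wish.
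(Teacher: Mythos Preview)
Your proposal is correct and follows essentially the same route as the paper. The paper is terser: it states (just before the lemma) that the Mountain Pass geometry is standard, invokes the Mountain Pass Lemma from \cite{MW} together with Lemma~\ref{le2.2}, cites \cite[Appendix D]{20} directly for the $C^{1,\gamma}$ regularity, and for strict positivity argues by contradiction at a hypothetical zero $x_0$ (where $(-\Delta)^su_\epsilon(x_0)<0$), which is exactly the pointwise version of the strong maximum principle you invoke.
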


\begin{proof}
The existence of $u_{\epsilon}$ can be checked by the Mountain Pass Lemma in \cite{MW}. By the regularity argument in \cite[Appendix D]{20}, $u_{\epsilon}$ is $C^{1,\gamma}$ for some $\gamma\in(0,1)$. {Testing the penalized equation \eqref{eq2.2} with $(u_{\epsilon})_{-}$ and integrating, we can see that $u_{\epsilon}\geq 0$.}

{Supposing to the contrary that there exists  $x_0\in \mathbb{R}^N$ such that $u_{\epsilon}(x_0) = 0$, we then have
$$
0 = (-\Delta)^s{u_{\epsilon}}(x_0) + V(x_0)u_{\epsilon}(x_0) < 0,
$$
which is a contradiction. Therefore, $u_{\epsilon}>0$.}

\end{proof}

\section{Concentration}\label{s3}

In this section, we prove that the solution  $u_{\epsilon}$ obtained by Lemma \ref{le2.3} will concentrate at a local minimum of $\Lambda$ as $\epsilon\to 0$ {via comparing the energy $c_{\epsilon}$ in Lemma \ref{le2.3} with the least energy of the limiting problem corresponding to \eqref{eq1.1}.} This concentration can help to linearize the penalized equation \eqref{eq2.2}\,(see \eqref{AAeq4.2} below).

\subsection{The limiting problem}

\noindent  {For $a > 0$, the limiting problem associated to  \eqref{eq1.1} is
$$
(-\Delta)^s v + a v = |v|^{p - 2}v,
\eqno(\mathcal{P}_{a})
$$
whose Lagrange-Euler  functional $J_{a}: H^s(\mathbb{R}^N)\to \mathbb{R}$ is defined  as
$$
J_{a}(v) = \frac{1}{2}\int_{\mathbb{R}^N}(|(-\Delta)^{s/2} v|^2 + a |v|^2)dx - \frac{1}{p}\int_{\mathbb{R}^N}|v|^p dx.
$$}

{From \cite{20,FQT}, we know that the limiting problem has a positive ground state. We define the limiting energy by
\begin{equation}\label{eq3.1}
\mathcal{C}(a) = \inf_{v\in H^s(\mathbb{R}^N)\backslash\{0\}}\max_{t > 0}J_{a}(tv).
\end{equation}
Since for every $v\in H^s(\mathbb{R}^N),\ J_{a}(|v|) \leq J_{a}(v)$, by the  density of $C_0^\infty(\mathbb{R}^N)$ in   $H^s(\mathbb{R}^N)$, we have
\begin{equation}\label{eq3.2}
  \mathcal{C}(a) = \inf_{{C^{\infty}_c(\mathbb{R}^N)\backslash\{0\}}\atop{v\geq 0}}\max_{t > 0}J_{a}(tv).
\end{equation}
The following proposition is obvious.
\begin{proposition}
Let $a > 0$ and $v\in H^s(\mathbb{R}^N)$. Define
$$
v_{a}(y) = a^{\frac{1}{p - 2}}v(a^{1/2s}y).
$$
Then
$$
J_{a}(v_{a}) = a^{\frac{p}{p - 2} - \frac{N}{2s}}J_1(v_1).
$$
In particular, $v$ is a solution of $(\mathcal{P}_1)$  if and only if $v_a$ is a solution of $(\mathcal{P}_{a})$ and
$$
\mathcal{C}({a}) = \mathcal{C}({1})a^{\frac{p}{p - 2} - \frac{N}{2s}}.
$$
Moreover, $\mathcal{C}({a})$ is continuous and increasing in $(0,\infty)$.
\end{proposition}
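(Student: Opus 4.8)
The proposition is purely a scaling statement, so the plan is to establish the identity $J_{a}(v_{a}) = a^{p/(p-2)-N/2s}\,J_{1}(v_{1})$ by transforming each of the three terms of $J_{a}$, and then to read off the remaining assertions as formal consequences.

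First I would compute how the three pieces of $J_{a}(v_{a})$ scale. Writing the kinetic term via the Gagliardo double integral and substituting $x\mapsto a^{-1/2s}x$, $y\mapsto a^{-1/2s}y$ produces the factor $a^{2/(p-2)}\cdot a^{(N+2s)/2s}\cdot a^{-N/s}=a^{\,2/(p-2)+1-N/2s}$ in front of $\|(-\Delta)^{s/2}v\|_{2}^{2}$. A single change of variables $y\mapsto a^{-1/2s}y$ gives $\|v_{a}\|_{2}^{2}=a^{\,2/(p-2)-N/2s}\|v\|_{2}^{2}$, so the mass term $a\|v_{a}\|_{2}^{2}$ carries the same exponent $2/(p-2)+1-N/2s$, and likewise $\|v_{a}\|_{p}^{p}=a^{\,p/(p-2)-N/2s}\|v\|_{p}^{p}$. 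Because $\tfrac{p}{p-2}=1+\tfrac{2}{p-2}$, the three exponents coincide; this is exactly where the prefactor $a^{1/(p-2)}$ in the definition of $v_{a}$ is dictated. Since $v_{1}=v$, this yields the stated identity.

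Next, for the equivalence of solutions I would argue that $v\mapsto v_{a}$ is a linear bijection of $H^{s}(\mathbb{R}^{N})$, so it carries critical points of $J_{1}$ to critical points of $a^{-\beta}J_{a}$, hence of $J_{a}$, where $\beta:=p/(p-2)-N/2s$; alternatively one checks directly, using $(-\Delta)^{s}\big(v(\lambda\cdot)\big)=\lambda^{2s}\big((-\Delta)^{s}v\big)(\lambda\cdot)$ with $\lambda=a^{1/2s}$, that $(-\Delta)^{s}v_{a}+av_{a}=a^{1+1/(p-2)}\big[(-\Delta)^{s}v+v\big](a^{1/2s}\cdot)$ while $|v_{a}|^{p-2}v_{a}=a^{1+1/(p-2)}\big(|v|^{p-2}v\big)(a^{1/2s}\cdot)$, so $(\mathcal{P}_{1})$ and $(\mathcal{P}_{a})$ are equivalent under the scaling. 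For the minimax level, since $v\mapsto v_{a}$ is a bijection of $H^{s}(\mathbb{R}^{N})\setminus\{0\}$ commuting with scalar multiplication ($t\,v_{a}=(tv)_{a}$), the identity gives $\max_{t>0}J_{a}(t\,w_{a})=a^{\beta}\max_{t>0}J_{1}(tw)$ for every $w\neq 0$; taking the infimum over $w$ (equivalently over $v=w_{a}$) and using \eqref{eq3.1} gives $\mathcal{C}(a)=\mathcal{C}(1)\,a^{\beta}$.

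Finally, continuity of $a\mapsto\mathcal{C}(1)a^{\beta}$ on $(0,\infty)$ is immediate, and for strict monotonicity it suffices to note $\mathcal{C}(1)>0$ — a consequence of the Mountain Pass geometry of $J_{1}$ — together with $\beta>0$. The last point is where the standing hypothesis enters: $\tfrac{p}{p-2}-\tfrac{N}{2s}>0\iff 2sp>N(p-2)\iff p(N-2s)<2N\iff p<2^{*}_{s}$, so the subcriticality assumption is precisely what makes $\beta$ positive. There is no genuine obstacle here; the only points needing care are the bookkeeping of exponents in the double-integral change of variables and the observation that the sign of $\beta$, and hence the monotonicity of $\mathcal{C}$, is governed exactly by $p<2^{*}_{s}$.
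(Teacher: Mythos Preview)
Your proof is correct. The paper itself provides no proof of this proposition; it merely declares the statement ``obvious'' and moves on. Your argument is exactly the natural one --- a direct change of variables in each of the three terms of $J_{a}$, followed by the observation that the map $v\mapsto v_{a}$ is a bijection of $H^{s}(\mathbb{R}^{N})\setminus\{0\}$ commuting with scalar multiplication, so that the scaling of $J_{a}$ transfers to the mountain-pass level $\mathcal{C}(a)$. The bookkeeping of exponents is accurate, and your remark that $\beta=\tfrac{p}{p-2}-\tfrac{N}{2s}>0$ is equivalent to the subcriticality $p<2^{*}_{s}$ correctly identifies why $\mathcal{C}(\cdot)$ is strictly increasing.
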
}

{Now we are going to compare the energy $c_{\epsilon}$ with $\mathcal{C}(V(x)),x\in\Lambda$. Firstly,
we give the upper bound of the energy $c_{\epsilon}$:
\begin{lemma}\label{GGle3.2}(Upper bound of the energy) we have
\begin{equation}\label{eq3.4}
 \limsup_{\epsilon\to 0}\frac{c_{\epsilon}}{\epsilon^N}\leq \inf_{x\in\Lambda}\mathcal{C}(V(x)).
\end{equation}
\end{lemma}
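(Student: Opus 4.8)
The plan is to exhibit, for each fixed $x_0\in\Lambda$, a competitor path in $\Gamma_\epsilon$ whose maximal energy is at most $\epsilon^N\mathcal{C}(V(x_0))+o(\epsilon^N)$, and then take the infimum over $x_0$. First I would recall that $(\mathcal{P}_a)$ with $a=V(x_0)$ has a positive ground state $w\in H^s(\R^N)$ realising $\mathcal{C}(V(x_0))$, and that by \eqref{eq3.2} this value is approximated by nonnegative $C_c^\infty$ functions; so fix $\delta>0$ and pick $w_\delta\in C_c^\infty(\R^N)$, $w_\delta\ge 0$, supported in some fixed ball $B_\rho$, with $\max_{t>0}J_{V(x_0)}(tw_\delta)\le \mathcal{C}(V(x_0))+\delta$. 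Then define the scaled, translated, plateaued test function
$$
v_\epsilon(x):=w_\delta\!\left(\frac{x-x_0}{\epsilon}\right),
$$
which for $\epsilon$ small has support inside $\Lambda$ (using $x_0\in\Lambda$, $\Lambda$ open, and that $\epsilon B_\rho$ shrinks to a point). Because the support lies in $\Lambda$, the penalised nonlinearity $g_\epsilon$ coincides there with the pure power $s\mapsto s_+^{p-1}$, so $\mathbf{G}_\epsilon(tv_\epsilon)=\frac1p\int |tv_\epsilon|^p$ and no $\mathcal{P}_\epsilon$ term appears.

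Next I would compute $\mathbf{J}_\epsilon(tv_\epsilon)$ by the change of variables $x=x_0+\epsilon y$. The Dirichlet term scales as $\int_{\R^N}\epsilon^{2s}|(-\Delta)^{s/2}v_\epsilon|^2 = \epsilon^N\int_{\R^N}|(-\Delta)^{s/2}w_\delta|^2$ (the $\epsilon^{2s}$ in front exactly cancels the $\epsilon^{-2s}$ produced by the fractional gradient under dilation, leaving the Jacobian $\epsilon^N$); the $L^p$ term gives $\epsilon^N\frac1p\int |tw_\delta|^p$; and the potential term is $\int_{\R^N} V(x_0+\epsilon y)\,t^2 w_\delta(y)^2\,\epsilon^N\,dy$. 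Since $V$ is continuous and $w_\delta$ has compact support, $V(x_0+\epsilon y)\to V(x_0)$ uniformly on $\operatorname{supp}w_\delta$, so this equals $\epsilon^N\big(t^2\tfrac12\int V(x_0)w_\delta^2 + o(1)\big)$ as $\epsilon\to0$, uniformly for $t$ in compact subsets of $(0,\infty)$. Altogether
$$
\epsilon^{-N}\mathbf{J}_\epsilon(tv_\epsilon) = J_{V(x_0)}(tw_\delta) + o(1)
$$
uniformly for $t$ in a compact interval; one checks the maximum over $t\in(0,\infty)$ is attained in a fixed compact interval independent of $\epsilon$ (standard, since the map $t\mapsto J_a(tw)$ is a concave-up-then-down function $\frac{t^2}{2}A-\frac{t^p}{p}B$ with $A,B>0$ bounded away from $0$ and $\infty$). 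Hence $\max_{t>0}\epsilon^{-N}\mathbf{J}_\epsilon(tv_\epsilon)\le \mathcal{C}(V(x_0))+\delta+o(1)$, and for $T$ large the path $t\mapsto (tT)v_\epsilon$, $t\in[0,1]$, lies in $\Gamma_\epsilon$ since $\mathbf{J}_\epsilon(Tv_\epsilon)<0$.

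From the definition of $c_\epsilon$ as an infimum over $\Gamma_\epsilon$ we get $\limsup_{\epsilon\to0}\epsilon^{-N}c_\epsilon\le \mathcal{C}(V(x_0))+\delta$; letting $\delta\to0$ and then taking the infimum over $x_0\in\Lambda$ yields \eqref{eq3.4}. The only genuinely delicate point is the uniform-in-$t$ control of the expansion together with the claim that the maximiser stays in a fixed compact $t$-interval, so that the $o(1)$ error does not get amplified when passing to the $\max_t$; this is handled by the explicit two-term structure of $t\mapsto J_a(tw)$ and uniform bounds on $\int|(-\Delta)^{s/2}w_\delta|^2$, $\int w_\delta^2$, $\int w_\delta^p$, all of which are fixed once $\delta$ is fixed. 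Everything else is a routine scaling computation and the continuity of $V$.
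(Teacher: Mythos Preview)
Your proof is correct and follows essentially the same approach as the paper's: both build a Mountain Pass path from a rescaled, translated nonnegative $C_c^\infty$ test function supported in $\Lambda$, compute $\mathbf{J}_\epsilon$ by scaling, and pass to the infimum over $x_0\in\Lambda$ via \eqref{eq3.2}. You are simply more explicit than the paper about the uniform-in-$t$ control needed to pass the $o(1)$ error through the $\max_{t>0}$, which the paper absorbs into the single line ``$\le \max_{t>0}J_{V(x_0)}(tv)+o_\epsilon(1)$''.
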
}

\begin{proof}
For any given $x_0 \in \Lambda$ and nonnegative function $v \in C^{\infty}_c (\mathbb{R}^N)\backslash\{0\}$, we define
$$
v_{\epsilon}(x) = v\Big(\frac{x - x_0}{\epsilon}\Big).
$$
Obviously, $supp \,\,v \subset \Lambda$ when $\epsilon$ is small enough. It is easy  to check that $\gamma_{\epsilon}(t) = tT_0v_{\epsilon}\in \Gamma_{\epsilon}$ for some $T_0$ large enough. So
\begin{align*}
  \frac{c_{\epsilon}}{\epsilon^N}&\leq \max_{t\in [0,1]}J_{\epsilon}(\gamma_{\epsilon}(t))\leq \max_{t>0}J_{V(x_0)}(tv) + o_\epsilon(1),
\end{align*}
and
$$
\limsup_{\epsilon\to 0}\frac{c_{\epsilon}}{\epsilon^N}\leq \inf_{{C^{\infty}_c(\mathbb{R}^N)\backslash\{0\}}\atop{v\geq 0}}\max_{t > 0}J_{V(x_0)}(tv) \leq \mathcal{C}(V(x_0)),
$$
which implies our conclusion.
\end{proof}

Secondly, we  give the lower estimate on energy $c_{\epsilon}$.

\begin{lemma}\label{le3.1}
 Let $\{ \epsilon_n \}_{n\in \mathbb{N}}$ be a sequence of positive numbers converging to $0$,  $\{ u_n\} $ be a sequence of critical points given by Lemma \ref{le2.3},
  and for $j\in {1,2,\cdots,k}$,  $\{ x^j_n\} $ be a sequence in $\mathbb{R}^N$  converging  to $x^j_{*}\in \mathbb{R}^N$. If
$$
\limsup_{n\to\infty}\frac{1}{\epsilon^N_n}\int_{\mathbb{R}^N} \big\{ \epsilon^{2s}_n|(-\Delta)^{s/2}u_n|^2 + V|u_n|^2 \big\} \,{\mathrm{d}}x  < \infty,
$$

$$
V(x^j_{*}) > 0\ \text{and}\ \lim\limits_{n\to \infty}\frac{|x^i_n - x^j_n|}{\epsilon_n} = \infty\ \ \text{if}\ i\neq j \quad \text{for all}\ i,j = 1,2,\cdots,k,
$$
and for some $\rho > 0$,
$$
\liminf_{n\to \infty}\|u_n\|_{L^{\infty}(B_{\epsilon_n\rho}(x^j_n))} > 0 \quad     \text{for all}\,  j = 1,2,\cdots k,
$$
then $x^j_{*}\in \bar{\Lambda}$ and
$$
\liminf_{n\to \infty}\frac{J_{\epsilon_n}(u_n)}{{\epsilon^N_n}}\geq\sum_{j = 1}^{k}\mathcal{C}(V(x^j_*)).
$$
\end{lemma}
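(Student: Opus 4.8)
The plan is to localize the energy of $u_n$ near each point $x^j_n$, rescale, extract a nontrivial weak limit solving a limiting equation $(\mathcal{P}_{V(x^j_*)})$, and add up the resulting energy contributions. First I would rescale around $x^j_n$ by setting $w^j_n(y) = u_n(x^j_n + \epsilon_n y)$; the hypothesis that the rescaled energy $\epsilon_n^{-N}\int(\epsilon_n^{2s}|(-\Delta)^{s/2}u_n|^2 + V|u_n|^2)$ stays bounded means each $w^j_n$ is bounded in $\dot H^s(\mathbb{R}^N)$, and the lower bound $\liminf_n \|u_n\|_{L^\infty(B_{\epsilon_n\rho}(x^j_n))} > 0$ together with $C^{1,\gamma}$-regularity (Lemma \ref{le2.3}) and uniform elliptic estimates forces the rescaled limit $w^j$ to be nontrivial. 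One must first verify $x^j_* \in \overline{\Lambda}$: if $x^j_* \in \mathbb{R}^N \setminus \overline{\Lambda}$, then on a neighborhood of $x^j_n$ the penalized nonlinearity is $\min(u_n^{p-1}, \mathcal{P}_{\epsilon_n}u_n)$, and on that set, after rescaling, the reaction term is controlled by $\mathcal{P}_{\epsilon_n}(x^j_n + \epsilon_n y) = o(\epsilon_n^{2s})$ by the defining property of $\mathcal{P}_\epsilon$, so the rescaled equation degenerates to $(-\Delta)^s w^j + c\, w^j \le 0$ with $c \ge 0$ (using $V \ge 0$), which by the strong maximum principle / Hardy-type arguments forces $w^j \equiv 0$, contradicting nontriviality. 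Hence $x^j_* \in \overline{\Lambda}$ and $V(x^j_*) = \lim_n V(x^j_n) > 0$.

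Next, knowing $x^j_* \in \overline{\Lambda}$ with $V(x^j_*)>0$, the term $V(x^j_n + \epsilon_n y) \to V(x^j_*)$ locally uniformly, and on $\Lambda$ the penalized nonlinearity is exactly $u^{p-1}$. A delicate point is that $x^j_n$ may sit on $\partial\Lambda$, so on part of the rescaled domain the nonlinearity is the truncated one; but since $V(x^j_*)>0$ and $p > 2 + \tfrac{2s}{N-2s}$ (so $p-2 > \tfrac{2s}{N-2s}$, giving a definite room between $s^{p-1}$ and $\mathcal{P}_\epsilon s$), and since $\mathcal{P}_{\epsilon_n}(x^j_n+\epsilon_n y) = o(\epsilon_n^{2s})$ while the limit $w^j$ is a fixed nonzero function, for $n$ large the $\min$ selects $u_n^{p-1}$ on every fixed ball wherever $w^j$ is not too small, and on the remaining region the contribution to the energy is negligible. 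Passing to the limit in the weak formulation, $w^j$ solves $(-\Delta)^s w^j + V(x^j_*) w^j = (w^j)^{p-1}$, i.e. $(\mathcal{P}_{V(x^j_*)})$, so $J_{V(x^j_*)}(w^j) \ge \mathcal{C}(V(x^j_*))$ by definition of the ground-state energy.

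Finally, I would sum over $j$. Since $|x^i_n - x^j_n|/\epsilon_n \to \infty$ for $i \ne j$, the rescaled bumps live on regions of $\mathbb{R}^N$ that drift mutually to infinity, so the supports used to capture $w^j$ are eventually disjoint, and by the (nonnegative) energy density
$$
\frac{J_{\epsilon_n}(u_n)}{\epsilon_n^N} \ge \sum_{j=1}^k \Big( \tfrac12 \int_{B_R(0)} |(-\Delta)^{s/2} w^j_n|^2 + V(x^j_n + \epsilon_n \cdot)|w^j_n|^2 - \int_{B_R(0)} G \Big) + o_n(1),
$$
where one must be a little careful because $(-\Delta)^{s/2}$ is nonlocal — the cross terms between far-apart bumps are $o_n(1)$ by a standard tail estimate on the Gagliardo kernel, using that the mass of $w^j_n$ outside large balls is uniformly small (this uniform tail decay is exactly where the global $L^2$ control from the Hardy inequality \eqref{eq1.4}, as in \eqref{eq3.3}, enters). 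Letting first $n\to\infty$ and then $R\to\infty$, Fatou's lemma yields $\liminf_n J_{\epsilon_n}(u_n)/\epsilon_n^N \ge \sum_{j=1}^k J_{V(x^j_*)}(w^j) \ge \sum_{j=1}^k \mathcal{C}(V(x^j_*))$.

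The main obstacle I expect is the nonlocality of $(-\Delta)^s$ in two places: (i) decoupling the $k$ bumps when estimating the fractional Gagliardo energy from below — unlike the local gradient, $\int\int |u_n(x)-u_n(y)|^2/|x-y|^{N+2s}$ does not split over disjoint regions, so one needs quantitative control of the interaction integrals between $B_{R\epsilon_n}(x^i_n)$ and $B_{R\epsilon_n}(x^j_n)$ and of the tail of each bump, which forces the use of the global $L^2$-bound coming from Hardy's inequality rather than a purely local compactness argument; and (ii) ruling out concentration outside $\overline{\Lambda}$, i.e. showing the penalized (truncated) nonlinearity really is too weak to support a nontrivial rescaled limit, which rests on the precise scaling $\mathcal{P}_\epsilon(x)|x|^{2s+\kappa} = o(\epsilon^{2s+3\kappa/2})$ built into the construction of $\mathcal{P}_\epsilon$.
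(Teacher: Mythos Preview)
Your overall strategy coincides with the paper's: rescale around each $x^j_n$, extract a nontrivial limit solving $(\mathcal{P}_{V(x^j_*)})$ (using a Liouville-type argument, as in \cite{APX}, to force $x^j_*\in\bar\Lambda$), and sum the local energy contributions. You also correctly flag the nonlocal cross terms and the Hardy inequality as the crucial ingredients.

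The one point that needs correction is the phrase ``by the (nonnegative) energy density'' used to justify your displayed lower bound. In this setting the local part of the density, $\tfrac12 V|u_n|^2 - \mathbf{G}_{\epsilon_n}(u_n)$, is \emph{not} nonnegative on $\R^N\setminus\bigcup_j B_{R\epsilon_n}(x^j_n)$: where $V$ vanishes (e.g.\ outside the support of a compactly supported $V$) one still has $\mathbf{G}_{\epsilon_n}(u_n)>0$, so the exterior region cannot simply be dropped. The paper handles this by testing the penalized equation against $\psi_{n,R}u_n$, with $\psi_{n,R}$ a cutoff vanishing on each $B_{R\epsilon_n}(x^j_n)$; the resulting identity \eqref{teq} converts the exterior energy into the nonlocal commutator
\[
R_n=\int_{\R^N}\int_{\R^N}\frac{u_n(y)\big(\psi_{n,R}(x)-\psi_{n,R}(y)\big)\big(u_n(x)-u_n(y)\big)}{|x-y|^{N+2s}}\,\mathrm{d}y\,\mathrm{d}x
\]
plus an annular remainder that is $o_R(1)$. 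The Hardy inequality \eqref{eq1.4} then enters precisely to show $\epsilon_n^{2s-N}R_n=o_R(1)$ via the tail estimate \eqref{eq3.3}; it is not merely a tool to decouple the $k$ bumps from one another, but the device that substitutes for the missing potential and makes the exterior contribution bounded below in the limit. With this mechanism inserted in place of the ``nonnegative density'' step, your outline goes through and matches the paper's proof.
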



Some details in the proof of this lemma will be omitted since they are similar to Proposition 3.4 in \cite{APX}, and more attentions will be paid on the difference caused by the fast decaying potential $V$ (see \eqref{eq3.3} below for more details).

\begin{proof}
 For $j\in \{1,\ldots,k\}$, the rescaling  function $v^j_n\in H^s_{loc}(\mathbb{R}^N)$ defined by
$$
v^j_n(y) = u_n(x^j_n + \epsilon_ny)
$$
satisfies weakly the rescaled equation
$$
(-\Delta)^{s} v^j_n + V^j_n v^j_n = \textsf{g}^j_n(v^j_n)\ \text{in}\ \mathbb{R}^N,
$$
where  $V^j_n(y) = V(x^j_n + \epsilon_n y),\ \textsf{g}^j_n(v^j_n) = g_{\epsilon_n}(x^j_n + \epsilon_ny, v^j_n).
$ By the assumption, $$\sup_{n\in \N}\|u_n\|_{\mathcal{D}^s_{V,\epsilon}(\R^N)}<+\infty, $$
 the sequence $\{ v^j_n\}_{n\ge 1}$ is bounded in $H^s_{loc}(\R^N)$. By the regularity assertion in Appendix D of \cite{20}, the Liouville-type Lemma in Lemma 3.3 of \cite{APX}, we can conclude that there exist $x^j_*\in\bar{\Lambda}$ and a nonnegative $v^j_*\in H^s(\R^N)\backslash\{0\}$ such that $v^j_n\to v^j_*$ in $H^s_{loc}(\R^N)\cap C^1_{loc}(\R^N)$, $x^j_n\to x^j_*$ and
$$
(-\Delta)^s v^j_* + V(x^j_*) v^j_* = (v^j_*)^{p - 1}.
$$
Since $v^j_* \geq 0$, we see
\begin{align*}
  &\liminf_{n\to\infty}\frac{1}{{\epsilon^N_n}}\int_{B_{\epsilon_nR}(x^j_n)}  \Big\{ \, \frac{1}{2}(\epsilon^{2s}_n|(-\Delta)^{s/2}u_n|^2 + V|u_n|^2) - \mathbf{G}_{\epsilon_n}(u_n)   \, \Big\}\,{\mathrm{d}}x \\
 \geq &\mathcal{C}(V(x^j_*)) - C\int_{\mathbb{R}^N\backslash B_R} \Big\{ \,|  (-\Delta)^{s/2}v^j_*|^2 + V(x^j_*)|v^j_*|^2 \, \Big\}\,{\mathrm{d}}x,
\end{align*}
where $C > 0$ is a universal constant.

In order to study the integral outside $B_{\epsilon_nR}(x^j_n)$, we choose cut-off function  $\eta(x) \in C^{\infty}(\mathbb{R}^N)$ such that $0\leq \eta\leq 1,\ \eta = 0$ if  $ 0 \leq |x| \leq 1$ and $\eta = 1$ when  $     |x| \geq  2$. Define
$$
\psi_{n,R}(x) = \prod^{k}_{j = 1}\eta\Big(\frac{x - x^j_n}{\epsilon_n R}\Big).
$$
Since $u_n$ is a solution to the penalized problem $\eqref{eq2.2}$, we have by taking $\psi_{n,R}u_n$ as a test function in the penalized problem $\eqref{eq2.2}$ that
\begin{align}\label{teq}
 \nonumber &\int_{\mathbb{R}^N\backslash\cup^{k}_{j = 1}B_{\epsilon_nR}(x^j_n)}  \Big\{ \epsilon^{2s}_n   \psi_{n,R}|(-\Delta)^{s/2}u_n|^2 + V\psi_{n,R}|u_n|^2  \Big\} \,{\mathrm{d}}x
 \\[1mm]
   = &\int_{\mathbb{R}^N\backslash\cup^{k}_{j = 1}B_{\epsilon_nR}(x^j_n)}\mathbf{g}_{\epsilon_n}(u_n)u_n\psi_{n,R} \,{\mathrm{d}}x
   \\[1mm]
\nonumber  &-\int_{\mathbb{R}^N\backslash\cup^{k}_{j = 1}B_{\epsilon_nR}(x^j_n)}\int_{\mathbb{R}^N}\frac{u_n(y)(\psi_{n,R}(x) - \psi_{n,R}(y))(u_n(x) - u_n(y))}{|x - y|^{N + 2s}}\,{\mathrm{d}}y\,{\mathrm{d}}x
\\[1mm]
\nonumber  :=& \int_{\mathbb{R}^N\backslash\cup^{k}_{j = 1}B_{\epsilon_nR}(x^j_n)}\mathbf{g}_{\epsilon_n}(u_n)u_n\psi_{n,R}     \,{\mathrm{d}}x  + R_n.
\end{align}
Hence
\begin{align*}
  &\ds\int_{\mathbb{R}^N\backslash\cup^{k}_{j = 1}B_{\epsilon_nR}(x^j_n)}   \Big\{\,  \frac{1}{2}(\epsilon^{2s}_n|(-\Delta)^{s/2}u_n|^2 + V|u_n|^2) - \mathbf{G}_{\epsilon_n}  \,  \Big\}  \,{\mathrm{d}}x
  \\
  \geq & \frac{1}{2}\int_{\mathbb{R}^N\backslash\cup^{k}_{j = 1}B_{\epsilon_nR}(x^j_n)} \Big\{\,  \epsilon^{2s}_n\psi_{n,R}|(-\Delta)^{s/2}u_n|^2 + V\psi_{n,R}|u_n|^2 - \mathbf{g}_{\epsilon_n}(u_n)u_n \,  \Big\}  \,{\mathrm{d}}x
  \\
   =& -\frac{\epsilon^{2s}_n}{2}R_n + \int_{\mathbb{R}^N\backslash\cup^{k}_{j = 1}B_{\epsilon_nR}(x^j_n)}\mathbf{g}_{\epsilon_n}(u_n)(\psi_{n,R} - 1)u_n  \,{\mathrm{d}}x.
\end{align*}
By scaling, $B_{\epsilon_nR}(x^j_n)\cap B_{\epsilon_nR}(x^l_n) = \emptyset$ if $n$ is large enough. Hence, by the fact that $v^j_n\to v^j_*$ in $L^{p}_{loc}(\mathbb{R}^N)$, we have
$$
\lim\sup_{n\to\infty}\Big|\frac{1}{2}\sum_{j = 1}^{k}\int_{B_{2R}\backslash B_{R}}\textsf{g}^j_n(v^j_n)v^j_n   \,{\mathrm{d}}x \Big|      = \Big|\frac{1}{2}\sum_{j = 1}^{k}\int_{B_{2R}\backslash B_{R}}(v^j_*)^p    \,{\mathrm{d}}x \Big| = o_R(1).
$$

Now we estimate $R_n$. A change of variable tells us
\begin{align}\label{eqi1}
R_n & = \int_{\mathbb{R}^N}u_n(y)\,{\mathrm{d}}y\int_{\mathbb{R}^N}\frac{(u_n(x) - u_n(y))(\psi_{n,R}(x) - \psi_{n,R}(y))}{|x - y|^{N + 2s}}\,{\mathrm{d}}x\\
\nonumber    & = \epsilon^{N - 2s}_n\sum_{l = 1}^k\int_{\mathbb{R}^N}v^l_n(y)\beta^l_n(y)\,{\mathrm{d}}y\int_{\mathbb{R}^N}\frac{\alpha^l_n(x)(v^l_n(x) - v^l_n(y))(\eta_R(x) - \eta_R(y))}{|x - y|^{N + 2s}}\,{\mathrm{d}}x,
\end{align}
where the functions $\beta^l_n$ and $\alpha^l_n$ are defined skillfully as
$$
\beta^l_n(y) = \prod^{l - 1}_{s = 0}\eta\Big(\frac{y}{R} + \frac{x^l_n - x^s_n}{\epsilon_n R}\Big),\,\,\,\beta^0_n(y)\equiv 1
$$
and
$$
\alpha^l_n(x) = \prod^{k}_{s = l + 1}\eta\Big(\frac{x}{R} + \frac{x^l_n - x^s_n}{\epsilon_n R}\Big),\,\,\, \alpha^k_n(x) \equiv 1.
$$
Following, we have
\begin{align*}
\epsilon^{2s - N}_nR_n & = \sum_{l = 1}^k\int_{B_{2R}} v^l_n(y) \beta^l_n(y) \,{\mathrm{d}}y \int_{B^c_{2R}}\frac{\alpha^l_n(x)(v^l_n(x) - v^l_n(y))(\eta_R(x) - \eta_R(y))}{|x - y|^{N + 2s}}\,{\mathrm{d}}x
\\[1mm]
   &\quad + \sum_{l = 1}^k \int_{B^c_{2R}}v^l_n(y)\beta^l_n(y)\,{\mathrm{d}}y  \int_{B_{2R}}\frac{\alpha^l_n(x)(v^l_n(x) - v^l_n(y))(\eta_R(x) - \eta_R(y))}{|x - y|^{N + 2s}}\,{\mathrm{d}}x
   \\[1mm]
   &\quad + \sum_{l = 1}^k\int_{B_{2R}}v^l_n(y)\beta^l_n(y)\,{\mathrm{d}}y   \int_{B_{2R}}\frac{\alpha^l_n(x)(v^l_n(x) - v^l_n(y))(\eta_R(x) - \eta_R(y))}{|x - y|^{N + 2s}}\,{\mathrm{d}}x\\[1mm]
   &:=R^{(1)}_n + R^{(2)}_n + R^{(3)}_n.
\end{align*}
By the choice of $\eta$ and  $\lim\limits_{n\to\infty}\frac{|x^l_n - x^s_n|}{\epsilon_n} = \infty$ if $l\neq s$, for $n$ large, we have
\begin{align*}
R^{(1)}_n&= \sum_{l = 1}^k\int_{B_{2R}\backslash B_R}v^l_n(y)\beta^l_n(y)\,{\mathrm{d}}y\int_{B^c_{2R}}\frac{\alpha^l_n(x)(v^l_n(x) - v^l_n(y))(1 - \eta_R(y))}{|x - y|^{N + 2s}}\,{\mathrm{d}}x\\[1mm]
       &\quad + \sum_{l = 1}^k\int_{{B_R}}v^l_n(y)\beta^l_n(y)\,{\mathrm{d}}y\int_{B^c_{2R}}\frac{\alpha^l_n(x)(v^l_n(x) - v^l_n(y))}{|x - y|^{N + 2s}}\,{\mathrm{d}}x\\[1mm]
       &= \sum_{l = 1}^k\int_{B_{2R}\backslash B_R}v^l_n(y)\,{\mathrm{d}}y\int_{B^c_{2R}}\frac{\alpha^l_n(x)(v^l_n(x) - v^l_n(y))(1 - \eta_R(y))}{|x - y|^{N + 2s}}\,{\mathrm{d}}x\\[1mm]
       &\quad + \sum_{l = 1}^k\int_{{B_R}}v^l_n(y)\,{\mathrm{d}}y\int_{B^c_{2R}}\frac{\alpha^l_n(x)(v^l_n(x) - v^l_n(y))}{|x - y|^{N + 2s}}\,{\mathrm{d}}x\\[1mm]
       :&=R^{(11)}_n + R^{(12)}_n
\end{align*}
and
\begin{align*}
        R^{(2)}_n
          =& \sum_{l = 1}^k\int_{B^c_{2R}}v^l_n(y)\beta^l_n(y)\,{\mathrm{d}}y\int_{B_{2R}\backslash B_R}\frac{\alpha^l_n(x)(v^l_n(x) - v^l_n(y))(\eta_R(x) - 1)}{|x - y|^{N + 2s}}\,{\mathrm{d}}x\\[1mm]
         & - \sum_{l = 1}^k\int_{B^c_{2R}}v^l_n(y)\beta^l_n(y)\,{\mathrm{d}}y\int_{B_R}\frac{\alpha^l_n(x)(v^l_n(x) - v^l_n(y))}{|x - y|^{N + 2s}}\,{\mathrm{d}}x \\[1mm]
         =& \sum_{l = 1}^k\int_{B^c_{2R}}v^l_n(y)\beta^l_n(y)\,{\mathrm{d}}y\int_{B_{2R}\backslash B_R}\frac{(v^l_n(x) - v^l_n(y))(\eta_R(x) - 1)}{|x - y|^{N + 2s}}\,{\mathrm{d}}x\\[1mm]
         & - \sum_{l = 1}^k\int_{B^c_{2R}}v^l_n(y)\beta^l_n(y)\,{\mathrm{d}}y\int_{B_R}\frac{v^l_n(x) - v^l_n(y)}{|x - y|^{N + 2s}}\,{\mathrm{d}}x \\[1mm]
                  :=&R^{(21)}_n + R^{(22)}_n.
\end{align*}
Also, for large $n$,
\begin{align*}
R^{(3)}_n =&\sum_{l = 1}^k\int_{B_{2R}\backslash{B_R}}v^l_n(y)\,{\mathrm{d}}y\int_{B_{2R}}\frac{(v^l_n(x) - v^l_n(y))(\eta_R(x) - \eta_R(y))}{|x - y|^{N + 2s}}\,{\mathrm{d}}x\\[1mm]
& + \sum_{l = 1}^k\int_{B_{2R}}v^l_n(y)\,{\mathrm{d}}y\int_{B_{2R}\backslash{B_R}}\frac{(v^l_n(x) - v^l_n(y))(\eta_R(x) - \eta_R(y))}{|x - y|^{N + 2s}}\,{\mathrm{d}}x\\[1mm]
&+ \sum_{l = 1}^k\int_{B_{2R}\backslash{B_R}}v^l_n(y)\,{\mathrm{d}}y\int_{B_{2R}\backslash B_{R}}\frac{(v^l_n(x) - v^l_n(y))(\eta_R(x) - \eta_R(y))}{|x - y|^{N + 2s}}\,{\mathrm{d}}x\\[1mm]
:=& R^{(31)}_n + R^{(32)}_n + R^{(33)}_n.
\end{align*}

For $|R^{(i2)}_n|,\ i = 1,2$, it holds
\begin{align*}
\nonumber    &\limsup_{n\to\infty}  |R^{(i2)}_n|  \leq CR^{-2s} + \limsup_{n\to\infty}2\sum_{l = 1}^k\int_{B^c_{2R}}\,{\mathrm{d}}y\int_{B_{R}}\frac{ (v^l_n(y))^2}{(|y| - R)^{N + 2s}}\,{\mathrm{d}}x.
\end{align*}
By the fractional Hardy inequality \eqref{eq1.4} and letting $\widetilde{R} = R^{\frac{N + 1}{N}}$, we find
\begin{align}\label{eq3.3}
\nonumber&\quad \limsup_{n\to\infty}\int_{B^c_{2R}}\big(v^l_n(y))^2\,{\mathrm{d}}y\int_{B_R}\frac{1}{|x - y|^{N + 2s}}\,{\mathrm{d}}x\\[1mm]
\nonumber&\leq C\limsup_{n\to\infty}\int_{B^c_{2R}}\big(v^l_n(y))^2\frac{R^N}{|y|^{N + 2s}}\,{\mathrm{d}}y\\[1mm]
\nonumber& \le C\limsup_{n\to\infty}\int_{B_{\widetilde{R}}\backslash B_{2R}}\big(v^l_n(y))^2\frac{R^N}{|y|^{N + 2s}}\,{\mathrm{d}}y + C\limsup_{n\to\infty} \int_{B^c_{\widetilde{R}}}\big(v^l
_n(y))^2\frac{R^N}{|y|^{N + 2s}}\,{\mathrm{d}}y\\[1mm]
&\leq C\limsup_{n\to\infty}\int_{B_{\widetilde{R}}\backslash B_{2R}}\big(v^l_n(y))^2\,{\mathrm{d}}y + C \limsup_{n\to\infty}\int_{B^c_{\widetilde{R}}}\frac{(v^l_n(y))^2}{|y|^{2s}}\frac{R^N}{|y|^{N}}\,{\mathrm{d}}y\\[1mm]
\nonumber &\leq  C\int_{B_{\widetilde{R}}\backslash B_{2R}}\big(v^l_*(y))^2\,{\mathrm{d}}y + \frac{C}{R}\\[1mm]
\nonumber& = o_R(1).
\end{align}

For $R^{(11)}_n$, by the estimates of  $R^{(i2)}_n$ above and the similar estimates of Proposition 3.4 in \cite{APX}, we have

%
%
%
\begin{align}\label{eq3.8}
&\limsup_{n\to\infty}  |R^{(11)}_n| \nonumber
\\[1mm]
&\leq
  \limsup_{n\to\infty}  \sum_{l = 1}^k\int_{B_{2R}\backslash B_R}\,{\mathrm{d}}y\int_{B^c_{2R}}\frac{|v^l_n(x) - v^l_n(y)|^2}{|x - y|^{N + 2s}}\,{\mathrm{d}}x \nonumber
  \\[1mm]
 &
 + \limsup_{n\to\infty}\sum_{l = 1}^k \int_{B_{2R}\backslash B_R}(v^l_n(y))^2\,{\mathrm{d}}y \int_{B^c_{2R}}\frac{(1 - \eta_{R}(y))^2}{|x - y|^{N + 2s}}\,{\mathrm{d}}x \nonumber
  \\[1mm]
 & \leq  \limsup_{n\to\infty}  \sum_{l = 1}^k  \int_{B_{2R}\backslash B_R}\,{\mathrm{d}}y\int_{B^c_{2R}\cap B_{4R}}\frac{|v^l_n(x) - v^l_n(y)|^2}{|x - y|^{N + 2s}}\,{\mathrm{d}}x\nonumber
  \\[1mm]
 &+ \limsup_{n\to\infty}\sum_{l = 1}^k \int_{B_{2R}\backslash B_R}\,{\mathrm{d}}y\int_{B^c_{4R}}\frac{|v^l_n(x) - v^l_n(y)|^2}{|x - y|^{N + 2s}}\,{\mathrm{d}}x + C\limsup_{n\to\infty}\sum_{l = 1}^k\int_{B_{2R}\backslash B_R}(v^l_n(y))^2\,{\mathrm{d}}y\nonumber
 \\[1mm]
 & \leq  \limsup_{n\to\infty}\sum_{l = 1}^k\int_{B_{2R}\backslash B_R}\,{\mathrm{d}}y\int_{B^c_{2R}\cap B_{4R}}\frac{|v^l_n(x) - v^l_n(y)|^2}{|x - y|^{N + 2s}}\,{\mathrm{d}}x + o_R(1)\nonumber
 \\[1mm]
& \leq C\int_{B_{2R}\backslash B_R}\,{\mathrm{d}}y\int_{R^N}\frac{|v^l_*(x) - v^l_*(y)|^2}{|x - y|^{N + 2s}}\,{\mathrm{d}}x + o_R(1).
\end{align}
Similarly, we get
\begin{align*}
    \limsup_{n\to\infty}  |R^{(21)}_n| &\leq o_R(1)
\end{align*}
and
\begin{align*}
  \limsup_{n\to\infty}|R^{(31)}_n| + |R^{(32)}_n| + |R^{(33)}_n|\leq o_R(1).
\end{align*}
Therefore
\begin{equation}\label{impotrant}
\epsilon^{2s - N}|R_n| \leq o_R(1).
\end{equation}
Finally, letting $R\to\infty$, we  have
$$
\liminf_{n\to \infty}\frac{J_{\epsilon_n}(u_n)}{{\epsilon^N_n}}\geq\sum_{j = 1}^{k}\mathcal{C}(V(x^j_*)).
$$
\end{proof}


At last, as a result of Lemma \ref{le3.1}, we have the  following concentration for the solution $u_{\epsilon}$.

\begin{lemma}\label{le2.5}
 Let $\rho > 0$. There exists a family of points $\{x_{\epsilon}\}\subset\Lambda$ such that
\begin{eqnarray*}
  &&(i)\ \liminf_{\epsilon\to 0}\|u_{\epsilon}\|_{L^{\infty}(B_{\epsilon\rho}(x_{\epsilon}))}> 0,\\
  &&(ii)\ \lim\limits_{\epsilon\to 0}V(x_{\epsilon}) = \inf_{\Lambda}V(x),\\
  &&(iii)\ \lim\limits_{{R\to \infty}\atop{{\epsilon\to 0}}}\|u_{\epsilon}\|_{L^{\infty}(U\backslash B_{\epsilon R}(x_{\epsilon}))} =  0.
\end{eqnarray*}
\end{lemma}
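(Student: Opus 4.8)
The plan is to let $x_\epsilon$ be a point at which the positive, $C^{1,\gamma}$ solution $u_\epsilon$ of \eqref{eq2.2} attains its maximum over $\overline\Lambda$, and to deduce (i)--(iii) by confronting the upper bound of Lemma~\ref{GGle3.2} with the lower energy bounds of Lemma~\ref{le3.1}, in the spirit of Proposition~3.4 and its corollaries in \cite{APX}. The first step --- and the one I expect to be the main obstacle, since it is where the fast decay of $V$ really enters --- is the uniform energy estimate
\[
\|u_\epsilon\|^2_{\mathcal{D}^s_{V,\epsilon}(\R^N)}\le C\,\epsilon^N\qquad\text{for all small }\epsilon,
\]
which is exactly what makes Lemma~\ref{le3.1} applicable to $u_\epsilon$. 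To get it I would test $\langle\mathbf{J}'_\epsilon(u_\epsilon),u_\epsilon\rangle=0$: on $\R^N\setminus\Lambda$ one has $\mathbf{g}_\epsilon(u_\epsilon)u_\epsilon\le\mathcal{P}_\epsilon u_\epsilon^2$, and by the fractional Hardy inequality \eqref{eq1.4} together with the construction of $\mathcal{P}_\epsilon$ this integral is $o_\epsilon(1)\,\|u_\epsilon\|^2_{\mathcal{D}^s_{V,\epsilon}(\R^N)}$, whence $\|u_\epsilon\|^2_{\mathcal{D}^s_{V,\epsilon}(\R^N)}\le C\int_\Lambda u_\epsilon^p$ for $\epsilon$ small. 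Since $\tfrac1p g_\epsilon(x,t)t-G_\epsilon(x,t)=0$ for $x\in\Lambda$ while $|\tfrac1p g_\epsilon(x,t)t-G_\epsilon(x,t)|\le(\tfrac12-\tfrac1p)\mathcal{P}_\epsilon(x)t^2$ for $x\notin\Lambda$, the identity $c_\epsilon=\mathbf{J}_\epsilon(u_\epsilon)-\tfrac1p\langle\mathbf{J}'_\epsilon(u_\epsilon),u_\epsilon\rangle$ (valid since $u_\epsilon$ is a critical point) expands to $(\tfrac12-\tfrac1p)\|u_\epsilon\|^2_{\mathcal{D}^s_{V,\epsilon}(\R^N)}+\int_{\R^N}\big(\tfrac1p\mathbf{g}_\epsilon(u_\epsilon)u_\epsilon-\mathbf{G}_\epsilon(u_\epsilon)\big)\ge(\tfrac12-\tfrac1p-o_\epsilon(1))\,\|u_\epsilon\|^2_{\mathcal{D}^s_{V,\epsilon}(\R^N)}$, so the claimed bound follows from Lemma~\ref{GGle3.2}.

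For (i) I would use $\inf_\Lambda V=\lambda>0$: from $\int_\Lambda u_\epsilon^2\le\lambda^{-1}\int_\Lambda Vu_\epsilon^2\le\lambda^{-1}\|u_\epsilon\|^2_{\mathcal{D}^s_{V,\epsilon}(\R^N)}$, $\int_\Lambda u_\epsilon^p\le\|u_\epsilon\|_{L^\infty(\Lambda)}^{p-2}\int_\Lambda u_\epsilon^2$, the bound $\|u_\epsilon\|^2_{\mathcal{D}^s_{V,\epsilon}(\R^N)}\le C\int_\Lambda u_\epsilon^p$ above and $u_\epsilon\not\equiv0$, one gets $\|u_\epsilon\|_{L^\infty(\Lambda)}\ge c_0>0$ uniformly in $\epsilon$; since $x_\epsilon\in B_{\epsilon\rho}(x_\epsilon)$ and $u_\epsilon(x_\epsilon)=\|u_\epsilon\|_{L^\infty(\Lambda)}$, this is (i). For (ii), if it failed there would be $\epsilon_n\to0$ with $x_{\epsilon_n}\to x_*\in\overline\Lambda$ and $V(x_*)=\lim_n V(x_{\epsilon_n})>\lambda$ (note $V(x_*)\ge\inf_{\overline\Lambda}V=\lambda$); since $V(x_*)>0$, the energy estimate holds, and (i) supplies the needed $L^\infty$ lower bound, Lemma~\ref{le3.1} with $k=1$ and $x^1_n=x_{\epsilon_n}$ gives $\liminf_n c_{\epsilon_n}/\epsilon_n^N\ge\mathcal{C}(V(x_*))>\mathcal{C}(\lambda)=\inf_{x\in\Lambda}\mathcal{C}(V(x))$ because $\mathcal{C}$ is increasing, contradicting Lemma~\ref{GGle3.2}. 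Hence $V(x_\epsilon)\to\lambda$; and since $V>\lambda$ on the compact set $\partial\Lambda\subset U\setminus\Lambda$ by $(\mathcal{A})$, $x_\epsilon$ stays away from $\partial\Lambda$ for small $\epsilon$, so after shrinking $\epsilon_0$ we obtain $\{x_\epsilon\}\subset\Lambda$.

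For (iii), if it failed there would be $\delta>0$, $R_n\to\infty$, $\epsilon_n\to0$ and $y_n\in\overline U$ with $|y_n-x_{\epsilon_n}|\ge\epsilon_nR_n$ and $u_{\epsilon_n}(y_n)\ge\delta$; passing to subsequences, $x_{\epsilon_n}\to x_*\in\overline\Lambda$ and $y_n\to y_*\in\overline U$. Assumption $(\mathcal{A})$ gives $V\ge\lambda>0$ on $\overline U$, so $V(x_*),V(y_*)>0$; moreover $|x_{\epsilon_n}-y_n|/\epsilon_n\ge R_n\to\infty$, $\|u_{\epsilon_n}\|_{L^\infty(B_{\epsilon_n\rho}(x_{\epsilon_n}))}\ge c_0$ by (i), and $\|u_{\epsilon_n}\|_{L^\infty(B_{\epsilon_n\rho}(y_n))}\ge\delta$. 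Then Lemma~\ref{le3.1} with $k=2$, $x^1_n=x_{\epsilon_n}$, $x^2_n=y_n$ forces $y_*\in\overline\Lambda$, hence $V(y_*)\ge\lambda$, and
\[
\liminf_{n\to\infty}\frac{c_{\epsilon_n}}{\epsilon_n^N}\ \ge\ \mathcal{C}(V(x_*))+\mathcal{C}(V(y_*))\ \ge\ 2\,\mathcal{C}(\lambda)\ =\ 2\inf_{x\in\Lambda}\mathcal{C}(V(x)),
\]
which contradicts Lemma~\ref{GGle3.2} because $\mathcal{C}(\lambda)>0$. Apart from the uniform energy estimate of the first paragraph --- the genuinely new point, which works only because the outside-$\Lambda$ part of the nonlinearity is absorbed through the fractional Hardy inequality and the design of $\mathcal{P}_\epsilon$, the same mechanism already used in \eqref{eq3.3} --- the argument is the standard del Pino--Felmer energy-comparison scheme adapted to the nonlocal problem.
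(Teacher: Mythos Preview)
Your proof is correct and follows essentially the same del Pino--Felmer energy-comparison scheme as the paper: choose $x_\epsilon$ as a maximum point of $u_\epsilon$ on $\overline\Lambda$, then combine the upper bound of Lemma~\ref{GGle3.2} with the lower bound of Lemma~\ref{le3.1} (with $k=1$ for (ii), $k=2$ for (iii)). Your version is in fact more complete than the paper's terse proof: you explicitly derive the uniform energy estimate $\|u_\epsilon\|^2_{\mathcal{D}^s_{V,\epsilon}}\le C\epsilon^N$ via Hardy absorption of the $\mathcal{P}_\epsilon$ term (which is a \emph{hypothesis} of Lemma~\ref{le3.1} that the paper invokes without justification), and you also supply the small argument, missing in the paper, that $x_\epsilon$ eventually lies in the \emph{open} set $\Lambda$ because $V(x_\epsilon)\to\lambda<\inf_{\partial\Lambda}V$.
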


\begin{proof}
It is easy to verify using the construction of $\mathcal{P}_{\epsilon}$ that
$$
\liminf_{\epsilon\to 0}\|u_{\epsilon}\|_{L^{\infty}(\Lambda)} > 0.
$$
Then by the regularity assertion in Appendix D of \cite{20}, we get the existence of $x_{\epsilon}\in\bar{\Lambda}$. We assume that $x_{\epsilon}\to x_*$.

By Lemma \ref{le3.1}, it holds
$$
\liminf_{\epsilon\to 0}\frac{\mathbf{J}_{\epsilon}(u_{\epsilon})}{\epsilon^N}\ge {\mathcal{C}}(V(x_*)),
$$
But,
$$
\inf_{\Lambda}\mathcal{C}(V(x))\ge \limsup_{\epsilon\to 0}\frac{\mathbf{J}_{\epsilon}(u_{\epsilon})}{\epsilon^N}.
$$
Hence by the monotonicity of $\mathcal{C}(\cdot)$, $V(x_*) = \inf_{\Lambda}V(x).$

Arguing by contradiction, if $(iii)$ is  not hold, we can infer from Lemma \ref{le3.1} that
$$
\inf_{\Lambda}\mathcal{C}(V(x))\ge\liminf_{\epsilon\to 0}\frac{\mathbf{J}_{\epsilon}(u_{\epsilon})}{\epsilon^N}\ge 2\inf_{\Lambda}\mathcal{C}(V(x)),
$$
which is a contradiction. The proof is then completed.

\end{proof}

\vspace{0.5cm}

\section{Back to the original problem}\label{s4}

\noindent In this section, we show that $u_{\epsilon}^{p - 2}\leq \mathcal{P}_{\epsilon}$ by comparison principle.

{By Lemma \ref{le2.5} and the $L^{\infty}$ estimate in \cite[Appendix D]{20}, we can assume that there exists a positive constant $C_{\infty}$ such that
\begin{equation}\label{eq4.0}
\sup_{x\in \Lambda}|u_{\epsilon}(x)|\leq C_{\infty}
\end{equation}
if $\epsilon$ is small enough,
from which we can linearize the penalized problem \eqref{eq2.2} outside small balls as follows.}

\begin{proposition}\label{pr4.1} For $\epsilon > 0$ small enough and $\delta\in (0,1)$, there exist $R > 0$ and $x_{\epsilon}\in \Lambda$  such that
\begin{equation}\label{AAeq4.2}
  \left\{
    \begin{array}{ll}
      \epsilon^{2s}(-\Delta)^{s} u_{\epsilon} + (1 - \delta)Vu_{\epsilon}\leq \mathcal{P}_{\epsilon}(x)u_{\epsilon}(x), & \text{in}\ \mathbb{R}^N\backslash  B_{R\epsilon}(x_{\epsilon}), \vspace{2mm}\\
      u_{\epsilon}\leq C_{\infty} & \text{in}\ \Lambda.
    \end{array}
  \right.
\end{equation}
\end{proposition}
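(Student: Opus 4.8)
The plan is to deduce the linearized inequality in \eqref{AAeq4.2} directly from the structure of the penalized nonlinearity $g_\epsilon$ together with the decay of $u_\epsilon$ established in Lemma \ref{le2.5}(iii). The second line of \eqref{AAeq4.2} is simply the $L^\infty$ bound \eqref{eq4.0}, so all the work goes into the first line. The key observation is that on $\mathbb{R}^N\setminus\Lambda$ one has pointwise $g_\epsilon(x,u_\epsilon(x)) \le \mathcal{P}_\epsilon(x)\,u_\epsilon(x)$ by the very definition of the $\min$, hence the penalized equation \eqref{eq2.2} already gives
$$
\epsilon^{2s}(-\Delta)^s u_\epsilon + V u_\epsilon \le \mathcal{P}_\epsilon(x)\, u_\epsilon(x) \quad \text{on } \mathbb{R}^N\setminus\Lambda .
$$
The remaining issue is the region $\Lambda\setminus B_{R\epsilon}(x_\epsilon)$, which is a thin annular-type set where the nonlinearity is the full power $u_\epsilon^{p-1}$ rather than the capped term. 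There I would use Lemma \ref{le2.5}(iii): given $\delta\in(0,1)$, choose $R$ large (and then $\epsilon$ small) so that $\sup_{x\in U\setminus B_{\epsilon R}(x_\epsilon)} u_\epsilon(x)^{p-2} \le \delta\,\lambda \le \delta\, V(x)$ for all $x\in\Lambda\setminus B_{R\epsilon}(x_\epsilon)$, using that $V\ge\lambda$ on $\Lambda$ by assumption $(\mathcal{A})$. Consequently on that set $u_\epsilon^{p-1} = u_\epsilon^{p-2}u_\epsilon \le \delta V u_\epsilon$, and since $\mathcal{P}_\epsilon \equiv 0$ on $\Lambda$ this yields $\epsilon^{2s}(-\Delta)^s u_\epsilon + (1-\delta)V u_\epsilon \le 0 = \mathcal{P}_\epsilon u_\epsilon$ there.

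Putting the two regions together: on $\mathbb{R}^N\setminus\Lambda$ we even have the stronger inequality with $V$ (not $(1-\delta)V$), and a fortiori the one with $(1-\delta)V$ since $u_\epsilon\ge 0$ and $V\ge 0$; on $\Lambda\setminus B_{R\epsilon}(x_\epsilon)$ we have just shown it. Hence the first line of \eqref{AAeq4.2} holds on all of $\mathbb{R}^N\setminus B_{R\epsilon}(x_\epsilon)$, as required. One technical point to state carefully is that $x_\epsilon\in\Lambda$ and $B_{R\epsilon}(x_\epsilon)\subset U$ once $\epsilon$ is small, which follows from $x_\epsilon\to x_*\in\Lambda$ and $\Lambda\subset\subset U$; this guarantees that the annular set $\Lambda\setminus B_{R\epsilon}(x_\epsilon)$ is genuinely contained in $U\setminus B_{\epsilon R}(x_\epsilon)$ so that Lemma \ref{le2.5}(iii) applies to control $u_\epsilon$ on it.

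The main obstacle is the quantitative coupling of the three parameters $\delta$, $R$ and $\epsilon$: one must first fix $\delta$, then use Lemma \ref{le2.5}(iii) to pick $R$ and a threshold $\epsilon_0(\delta,R)$ so that $\|u_\epsilon\|_{L^\infty(U\setminus B_{\epsilon R}(x_\epsilon))}^{p-2}\le \delta\lambda$ for $\epsilon<\epsilon_0$, and only then does the pointwise comparison close. A secondary subtlety is that Lemma \ref{le2.5}(iii) is stated as a joint limit as $R\to\infty$ and $\epsilon\to 0$, so one should extract from it the (equivalent) statement that for each $\eta>0$ there are $R_\eta$ and $\epsilon_\eta$ with $\|u_\epsilon\|_{L^\infty(U\setminus B_{\epsilon R}(x_\epsilon))}\le\eta$ whenever $R\ge R_\eta$ and $\epsilon\le\epsilon_\eta$; applying this with $\eta = (\delta\lambda)^{1/(p-2)}$ gives exactly what is needed. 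No regularity or maximum-principle machinery beyond what is already invoked is required here — the content is entirely the decay estimate plus the algebra of the capped nonlinearity.
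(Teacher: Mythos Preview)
Your proposal is correct and matches the paper's own (implicit) argument: the paper does not spell out a proof of Proposition~\ref{pr4.1} but simply says it follows from Lemma~\ref{le2.5} together with the uniform $L^\infty$ bound \eqref{eq4.0}, which is exactly the reasoning you give --- use $g_\epsilon\le\mathcal{P}_\epsilon u_\epsilon$ on $\mathbb{R}^N\setminus\Lambda$ by definition of the min, and absorb $u_\epsilon^{p-1}\le\delta V u_\epsilon$ on $\Lambda\setminus B_{R\epsilon}(x_\epsilon)$ via the smallness in Lemma~\ref{le2.5}(iii) and $V\ge\lambda$ on $\Lambda$. One minor remark: the inclusion $\Lambda\setminus B_{R\epsilon}(x_\epsilon)\subset U\setminus B_{R\epsilon}(x_\epsilon)$ is automatic from $\Lambda\subset U$, so the containment check you flag as a ``technical point'' requires no work.
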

\noindent Letting $v_{\epsilon}(x) = u_{\epsilon}(\epsilon x + x_{\epsilon})$, it is easy to check that
\begin{equation}\label{AAeq4.3}
  \left\{
    \begin{array}{ll}
      (-\Delta)^{s} v_{\epsilon} + (1 - \delta)V_{\epsilon}(x)v_{\epsilon}\leq \widetilde{P}_{\epsilon}(x)u_{\epsilon}(x), & \text{in}\ \mathbb{R}^N\backslash  B_{R}(0), \vspace{2mm}\\
      v_{\epsilon}\leq C_{\infty} & \text{in}\ B_{R}(0),
    \end{array}
  \right.
\end{equation}
where $V_{\epsilon}(\cdot) = u_{\epsilon}(\epsilon \cdot + x_{\epsilon})$ and $\widetilde{P}_{\epsilon}(\cdot) = \mathcal{P}_{\epsilon}(\epsilon \cdot + x_{\epsilon})$.

 Now we construct a suitable sup-solution to equation \eqref{AAeq4.3}. Let ${ \tilde{\eta}_\beta }(|x|)  :\R^N\to [0,1]$ be a smooth non-increasing function with ${ \tilde{\eta}_{\beta}  }(s) \equiv 1$         when  $ s \in [-1,1]$ and $\tilde{\eta}_{\beta}\equiv 0$ when  $  s\in (-\infty,-1 - \beta)\bigcup(1 + \beta,+\infty)$, where $\beta>0$ is a small parameter.
Define $\eta_{\beta}(|x|) = \tilde{\eta}_{\beta}(R|x|)$. It is worth mentioning  that the non-increasing of $\eta$ is important, see \eqref{Ieq1} below.
Setting $0<\alpha< N -2s$ and denoting  $$f^{\beta}_{\alpha}(x) = \eta_{\beta}(x)\frac{1}{R^{\alpha}} + (1 - \eta_{\beta}(x))\frac{1}{|x|^{\alpha}},$$  we have:
\begin{proposition}\label{pr3.2}
Let $\epsilon>0$ be small enough. Then for every $x\in \mathbb{R}^N\backslash B_R(0)$, it holds
\begin{equation}\label{AAeq4.5}
      (-\Delta)^{s} f^{\beta}_{\alpha} + V_{\epsilon}(x)f^{\beta}_{\alpha} - \widetilde{P}_{\epsilon}(x)f^{\beta}_{\alpha}\geq 0.
\end{equation}
\end{proposition}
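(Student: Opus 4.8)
The plan is to estimate $(-\Delta)^s f^{\beta}_{\alpha}$ from below by exploiting that, away from the transition region $\{R\le |x|\le (1+\beta)R\}$, the function $f^{\beta}_{\alpha}(x)=|x|^{-\alpha}$ and one can use the classical computation of the fractional Laplacian of a Riesz potential. First I would recall that for $0<\alpha<N-2s$ there is a constant $c(N,s,\alpha)>0$ with $(-\Delta)^s\big(|x|^{-\alpha}\big) = c(N,s,\alpha)\,|x|^{-\alpha-2s}$ in $\R^N\setminus\{0\}$; since $\alpha$ is to be chosen in $(2s/(p-2),N-2s)$ (cf.\ Theorem \ref{th1.1}), the exponent is admissible and $c>0$. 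This gives the leading-order positive term. The point of the cut-off $\eta_{\beta}$ is only to make $f^{\beta}_{\alpha}$ bounded near the origin (so that it can serve as a barrier on all of $\R^N\setminus B_R(0)$, including across $B_R$ where $v_\epsilon\le C_\infty$); I would absorb the error it produces into the main term.

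The key steps, in order: (1) Write $f^{\beta}_{\alpha} = |x|^{-\alpha} + w_{\beta}$, where $w_{\beta}(x) = \eta_{\beta}(x)\big(R^{-\alpha} - |x|^{-\alpha}\big)$ is supported in $\overline{B_{(1+\beta)R}(0)}$ and satisfies $|w_\beta|\le C R^{-\alpha}$. (2) For $x$ with $|x|\ge (1+\beta)R$ — the ``far'' region — estimate $(-\Delta)^s w_\beta(x) = -\int_{\R^N}\frac{w_\beta(y)}{|x-y|^{N+2s}}\,dy$ (no singular part, since $w_\beta$ vanishes near $x$); since $w_\beta\le 0$ is supported in $B_{(1+\beta)R}$, this integral is $\gtrsim -R^{N-\alpha}|x|^{-N-2s}$, which for $|x|\ge 2R$ is dominated by $\tfrac12 c\,|x|^{-\alpha-2s}$ once $R$ is large; the intermediate shell $(1+\beta)R\le|x|\le 2R$ is handled by the same bound with constants depending only on $\beta$. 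Here the monotonicity (non-increasing) of $\eta_\beta$ is used exactly as flagged: it guarantees $w_\beta$ has a sign, so the sign of its contribution to $(-\Delta)^s f^\beta_\alpha$ is controlled. (3) For $x$ in the transition shell $R\le |x|\le (1+\beta)R$, $f^\beta_\alpha$ is smooth with derivatives bounded by $C(\beta)R^{-\alpha-2}$ and $C(\beta)R^{-\alpha-1}$, so $|(-\Delta)^s f^\beta_\alpha(x)|\le C(\beta)R^{-\alpha-2s}$ by splitting the principal-value integral over $B_1(x)$ and its complement; meanwhile $f^\beta_\alpha(x)\asymp R^{-\alpha}$ and $V_\epsilon(x)\ge 0$, so we only need $\widetilde P_\epsilon(x) f^\beta_\alpha(x)$ not to exceed the available room — but by the construction \eqref{eq2.1}--(2.3) of $\mathcal P_\epsilon$ and the rescaling, $\widetilde P_\epsilon(x) = \mathcal P_\epsilon(\epsilon x + x_\epsilon)\le o(1)\,\epsilon^{2s+3\kappa/2}|\epsilon x + x_\epsilon|^{-2s-\kappa}$; on $\R^N\setminus B_R(0)$ this is $\le o(1)(R)^{-2s-\kappa}\cdot(\text{something small in }\epsilon)$, in particular $\widetilde P_\epsilon f^\beta_\alpha$ is negligible compared with $c\,|x|^{-\alpha-2s}\ge c(1+\beta)^{-\alpha-2s}R^{-\alpha-2s}$ in that shell. (4) Finally, in all three regimes drop the nonnegative term $V_\epsilon f^\beta_\alpha\ge 0$ and collect: $(-\Delta)^s f^\beta_\alpha + V_\epsilon f^\beta_\alpha - \widetilde P_\epsilon f^\beta_\alpha \ge \tfrac12 c\,|x|^{-\alpha-2s} - \widetilde P_\epsilon f^\beta_\alpha \ge 0$ for $\epsilon$ small (and $R$ fixed large, as provided by Proposition \ref{pr4.1}).

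The main obstacle I anticipate is making the far-field estimate of $(-\Delta)^s w_\beta$ clean enough to beat $c\,|x|^{-\alpha-2s}$ \emph{uniformly} down to $|x|=R$: near $|x|=(1+\beta)R$ the ``tail'' $-\int w_\beta(y)|x-y|^{-N-2s}dy$ and the smooth-region error from step (3) are of the same order $R^{-\alpha-2s}$, so one has to check the constants rather than just orders of magnitude, choosing $\beta$ small and $R$ large in the right sequence. The comparison with $\widetilde P_\epsilon$ is, by contrast, essentially free because \eqref{eq2.1}--(2.3) was designed precisely so that $\mathcal P_\epsilon$ is a lower-order perturbation; the only care needed is that $|\epsilon x + x_\epsilon|\ge \epsilon R - \operatorname{diam}\Lambda$ must be handled, but since we work on $\R^N\setminus B_R(0)$ in the \emph{unscaled} variable (Proposition \ref{pr4.1}) this is automatic. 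All remaining computations are the routine Riesz-potential identity and standard splitting of principal-value integrals, so I would state them and cite the pointwise bounds rather than reproduce them.
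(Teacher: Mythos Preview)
Your proposal has a genuine gap in the transition shell $R\le |x|\le (1+\beta)R$. In step~(3) you obtain only an absolute-value bound $|(-\Delta)^s f^\beta_\alpha(x)|\le C(\beta)R^{-\alpha-2s}$, which allows $(-\Delta)^s f^\beta_\alpha$ to be as negative as $-C(\beta)R^{-\alpha-2s}$; yet in step~(4) you assert $(-\Delta)^s f^\beta_\alpha\ge \tfrac12 c|x|^{-\alpha-2s}$ ``in all three regimes'' and explicitly drop $V_\epsilon f^\beta_\alpha$. These are incompatible: step~(3) furnishes no positive lower bound in the shell, and the constant $C(\beta)$ (which scales like $C/\beta$ because $|D^2\eta_\beta|\sim (\beta R)^{-2}$) cannot be pushed below the Riesz constant $c$ by choosing $\beta$ small or $R$ large. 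The paper resolves this precisely by \emph{not} discarding $V_\epsilon$ in the shell: since $x_\epsilon\in\Lambda$ and $|x|\le 2R$, for $\epsilon$ small one has $\epsilon x+x_\epsilon\in\Lambda$, hence $V_\epsilon(x)\ge\inf_\Lambda V=\lambda>0$, and the term $V_\epsilon f^\beta_\alpha\gtrsim \lambda R^{-\alpha}$ dominates the error $C(\beta)R^{-\alpha-2s}$ once $R$ is large. This is the first line of the paper's concluding display~\eqref{AAeq4.5}; your use of ``$V_\epsilon\ge 0$'' in step~(3) is too weak.

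There is also a secondary error in step~(2): the claim that $w_\beta\le 0$ on $B_{(1+\beta)R}$ is false. On $B_R$ one has $w_\beta=R^{-\alpha}-|y|^{-\alpha}\le 0$, but on the shell $R<|y|<(1+\beta)R$ one has $R^{-\alpha}>|y|^{-\alpha}$, so $w_\beta=\eta_\beta(y)(R^{-\alpha}-|y|^{-\alpha})\ge 0$; thus $w_\beta$ changes sign. The monotonicity of $\eta_\beta$ therefore does not give $w_\beta$ a sign; in the paper it is used for a different purpose, namely in the shell computation~\eqref{Ieq1} to replace $\eta_\beta(y|x|)$ by $\eta_\beta(|x|)$ after rescaling. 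This misreading does not wreck your far-field estimate (the shell contribution to $(-\Delta)^s w_\beta$ is $O(\beta)$ and can still be absorbed), but the stated justification is incorrect and you have misidentified where the flagged monotonicity actually enters.
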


\begin{proof}
For $x\in \R^N\backslash B_{2R}(0)$, letting $\beta>0$ be small enough, it holds
\begin{align*}
&(-\Delta)^sf^{\beta}_{\alpha}(x)\\
                  & = \int_{{ B_R(0)}}\frac{|x|^{-\alpha} - {R^{-\alpha}}}{|x - y|^{N + 2s}}\,{\mathrm{d}}y + \int_{\mathbb{R}^N\backslash{{ B_R(0)}}}\frac{|x|^{-\alpha} - |y|^{-\alpha}}{|x - y|^{N + 2s}}\,{\mathrm{d}}y + \int_{\mathbb{R}^N\backslash{{ B_R(0)}}}\frac{\eta(y)|y|^{-\alpha} - \eta(y){R^{-\alpha}}}{|x - y|^{N + 2s}}\,{\mathrm{d}}y
                  \\[1mm]
                  & = (-\Delta)^s(|\cdot|^{\alpha})(x) +  \int_{{{ B_R(0)}}}\frac{|y|^{-\alpha} - {R^{-\alpha}}}{|x - y|^{N + 2s}}\,{\mathrm{d}}y + \int_{\mathbb{R}^N\backslash{{ B_R(0)}}}\frac{\eta(y)|y|^{-\alpha} - \eta(y){R^{-\alpha}}}{|x - y|^{N + 2s}}\,{\mathrm{d}}y
                  \\[1mm]
                  & \ge \frac{f^{\beta}_{\alpha}(x)}{|x|^{2s}}\Big(\int_{(B_1(0))^c}\frac{1- |z|^{\alpha}}{|z|^{N - 2s}|z - \vec{e}_1|^{N + 2s}}dz + \int_{(B_1(0))^c}\frac{|z|^{\alpha} - 1 }{|z|^{\alpha}|z - \vec{e}_1|^{N + 2s}}dz\Big)
                  \\[1mm]
                  &\quad +  R^{N - \alpha}\Big(\int_{B_1(0)}\frac{|y|^{-\alpha} - 1}{|x - yR|^{N + 2s}}\,{\mathrm{d}}y + \int_{B_{{1 + \beta}}(0)\backslash{{ B_1(0)}}}\frac{|y|^{-\alpha} - {1}}{|x - yR|^{N + 2s}}\,{\mathrm{d}}y\Big)
                  \\[1mm]
                  &\ge C_{\alpha}\frac{f^{\beta}_{\alpha}(x)}{|x|^{2s}}
\end{align*}

When $x\in B_{2R}(0)\backslash B_R(0)$, by the construction of $\eta_{\beta}$ and the computation above, it holds
\begin{align*}
&\quad(-\Delta)^sf^{\beta}_{\alpha}(x)\\
                  & = \int_{\R^N}\frac{(|x|^{-\alpha} - |y|^{-\alpha}) + \eta_{\beta}(y)(|y|^{-\alpha} - |x|^{-\alpha})}{|x - y|^{N + 2s}}\,{\mathrm{d}}y
                   \\[1mm]
                  & \quad + \int_{\R^N}\frac{(\eta_{\beta}(x) - \eta_{\beta}(y))R^{-\alpha} + (\eta_{\beta}(y) - \eta_{\beta}(x))|x|^{-\alpha}}{|x - y|^{N + 2s}}\,{\mathrm{d}}y
                  \\[1mm]
                  &= (R^{-\alpha} - |x|^{-\alpha})\int_{\R^N}\frac{\eta_{\beta}(x)- \eta_{\beta}(y)}{|x - y|^{N + 2s}}\,{\mathrm{d}}y + \int_{\R^N}\frac{\eta_{\beta}(y)(|y|^{-\alpha} - |x|^{-\alpha})}{|x - y|^{N + 2s}}\,{\mathrm{d}}y
                  \\[1mm]
                  &= (R^{-\alpha} - |x|^{-\alpha})\frac{(-\Delta)^s\tilde{\eta}_{\beta}(R/x)}{R^{2s}} + \int_{\R^N}\frac{\eta_{\beta}(y)(|y|^{-\alpha} - |x|^{-\alpha})}{|x - y|^{N + 2s}}\,{\mathrm{d}}y
                  \\[1mm]
                  & :\ge {-C_{\beta}}{R^{-(\alpha + 2s)}} + K^{\beta}_{\alpha},
\end{align*}
where $C_{\beta}$ is a positive constant depending only on $\beta$.

For $K^{\beta}_{\alpha}$, by Change-Of-Variable Theorem and the non-increasing of $\tilde{\eta}_{\beta}$, it holds
\begin{align}\label{Ieq1}
|x|^{\alpha + 2s}K^{\beta}_{\alpha}
\nonumber& = \int_{B_{1}(0)}\frac{\eta_{\beta}(y|x|)(|y|^{-\alpha} - 1)}{|y - \vec{e}_1|^{N + 2s}}\,{\mathrm{d}}y + \int_{B^c_{1}(0)\cap B_{1 + \beta}(0)}\frac{\eta_{\beta}(y|x|)(|y|^{-\alpha} - 1)}{|y - \vec{e}_1|^{N + 2s}}\,{\mathrm{d}}y
\\[1mm]
\nonumber &\ge \int_{B_{1}(0)}\frac{\eta_{\beta}(|x|)(|y|^{-\alpha} - 1)}{|y - \vec{e}_1|^{N + 2s}}\,{\mathrm{d}}y + \int_{B^c_{1}(0)\cap B_{1 + \beta}(0)}\frac{\eta_{\beta}(|x|)(|y|^{-\alpha} - 1)}{|y - \vec{e}_1|^{N + 2s}}\,{\mathrm{d}}y
\\[1mm]
& \ge \eta_{\beta}(|x|)\int_{B_{3}(\vec{e}_1)}\frac{(|y|^{-\alpha} - 1)}{|y - \vec{e}_1|^{N + 2s}}\,{\mathrm{d}}y\\[1mm]
\nonumber&\ge \widetilde{C}_{\alpha},
\end{align}
where $\widetilde{C}_{\alpha}$ is a constant depending only on $\alpha$(it will hold $\widetilde{C}_{N - 2s} > 0$ if $\alpha = N - 2s$).

Finally, by the computation above, we conclude that if $R$ is large enough and $\epsilon$ is small enough, it holds
\begin{align}\label{AAeq4.5}
     \nonumber &\quad(-\Delta)^{s} f^{\beta}_{\alpha} + V_{\epsilon}(x)f^{\beta}_{\alpha} - \widetilde{P}_{\epsilon}(x)f^{\beta}_{\alpha}\\[2mm]
&\ge \left\{
       \begin{array}{ll}
         -(C_{\beta} + \widetilde{C}_{\alpha}) R^{-\alpha - 2s} + \big(\inf_{x\in\Lambda_{\epsilon}}V(x)\big)f^{\beta}_{\alpha}(x), & x\in B_{2R}(0)\backslash B_R(0), \\[5mm]
         C_{\alpha}\frac{f^{\beta}_{\alpha}}{|x|^{2s}} + V_{\epsilon}(x)f^{\beta}_{\alpha} - \widetilde{P}_{\epsilon}(x)f^{\beta}_{\alpha}, & x\in\R^N\backslash B_{2R}(0),
       \end{array}
     \right.\\[2mm]
\nonumber&\ge 0
\end{align}
for every $x\in \R^N\backslash B_R(0)$, where $\Lambda_{\epsilon} = \{x:\epsilon x + x_{\epsilon}\in\Lambda\}$. This completes the proof.


\end{proof}

\noindent At the last of this section, we give the proof of  Theorem \ref{th1.1}

\textbf{Proof of Theorem \ref{th1.1}}.
Let
\begin{align}\label{eq4.5}
\left\{
  \begin{array}{ll}
    \alpha \in\Big(\frac{2s}{p - 2},N - 2s\Big),\ \kappa = \frac{\alpha(p - 2) - 2s}{4},
\mathcal{P}_{\epsilon}(x) = \frac{\epsilon^{2s + 2\kappa}}{|x|^{2s + \kappa}}\chi_{\R^N\backslash\Lambda}(x), & \\[5mm]
    \overline{U}(x) = CR^{\alpha}f^{\beta}_{\alpha}(x), &
  \end{array}
\right.
\end{align}
where the function $f^{\beta}_{\alpha}$ is that in Proposition \ref{pr3.2}.

 It is easy to check that $\mathcal{P}_{\epsilon}$ satisfies the assumption \eqref{eq2.1}.

 By Proposition \ref{pr3.2}, letting the constant  $C>0$ above be large enough and $\tilde{v}_{\epsilon}(x) = \overline{U}(x) - v_{\epsilon}(x)$, we have
\begin{equation*}
  \left\{
    \begin{array}{ll}
      (-\Delta)^{s} \tilde{v}_{\epsilon}(x) + V_{\epsilon}(x)\tilde{v}_{\epsilon}(x) - \widetilde{P}_{\epsilon}(x)\tilde{v}_{\epsilon}(x)\geq 0, & \text{in}\ \mathbb{R}^N\backslash  B_{R}(0), \vspace{2mm}\\[5mm]
      \tilde{v}_{\epsilon}(x)\ge 0 & \text{in}\ B_R(0).
    \end{array}
  \right.
\end{equation*}
Then, since $\tilde{v}_{\epsilon}\in \dot{H}^s(\R^N)$   (when $\alpha$ is closed to $N - 2s$), testing the equation above against with $\tilde{v}^-_{\epsilon}(x)$, by the fractional Hardy inequality in \eqref{eq1.4}, we find $\tilde{v}^-_{\epsilon}(x) = 0,\ x\in\R^N$. Hence $\tilde{v}_{\epsilon}(x)\ge 0,\ x\in\R^N$. Especially, we have
$$
u_{\epsilon}(x)\le \frac{C\epsilon^{\alpha}}{\epsilon^{\alpha} + |x - x_{\epsilon}|^{\alpha}},\ \forall \,  x\in\R^N.
$$
Moreover, it holds
$$
\big(u_{\epsilon}(x)\big)^{p - 2}\le \mathcal{P}_{\epsilon}(x),\ \forall  \, x\in\R^N\backslash\Lambda.
$$

As a result, $u_{\epsilon}$ solves the origin problem. This completes the proof.

\vspace{0.5cm}
\section{Further results}\label{s5}

In this section, we will consider \eqref{eq1.1} with general nonlinearity, i.e.,
\begin{equation}\label{eq5.1}
  \epsilon^{2s}(-\Delta)^su + V(x)u = f(u),
\end{equation}
where the potential $V(x)$ is the same as before, the nonlinearity $f:\mathbb{R}\to \mathbb{R}$ is assumed to satisfy the following properties:

$(\textbf{f}_1)$ $f$ is an odd function and $f(t) = o(t^{1 + \tilde{\kappa}})$ as $t\to 0^+$, where $\tilde{\kappa} = \frac{2s + 2\kappa}{\alpha -\nu}>0$ with $\nu>0$ is a small parameter and $\kappa$ is the parameter in \eqref{eq4.5}.

$(\bf{f}_2)$  $\lim_{t\to\infty}\frac{f(t)}{t^p} = 0$ for some $1<p<2^*_{s} - 1$.

$({ \bf{f} }_3)$ There exists $2<\theta \leq p + 1$ such that
$$
0\leq\theta F(t) < f(t)t\ \text{for all}\ t>0,
$$
\ \ \ \ \ \ \ \ \ where $F(t) = \ds\int_{0}^tf(\alpha)\,{\mathrm{d}} \alpha$.

$(\textbf{f}_4)$ The map $t\mapsto \frac{f(t)}{t}$ is increasing on $(0,+\infty)$.

\vspace{0.2cm}

\noindent We have the following results  which are  same as Theorem \ref{th1.1}.

\begin{theorem}\label{th5.1}
Let $N > 2s$, $s\in(0,1)$, $p\in\Big(2+\frac{2s}{N - 2s},2^*_s\Big)$, $f$ satisfy $(\bf{f}_1)$- $(\bf{f}_4)$ and $V$ be the same as before. Then problem \eqref{eq1.1} has a positive solution $\hat{u}_{\epsilon}\in \mathcal{D}^s_{V,\epsilon}(\mathbb{R}^N)$ if $\epsilon>0$ is small enough. Moreover, there exist a $\hat{x}_{\epsilon}\in\Lambda$ and an $\alpha\in(2s/(p - 2),2^*_s)$ such that
$$
 \lim_{\epsilon\to 0}V(\hat{x}_{\epsilon}) = \min_{\Lambda}V(x)
 $$
 and
\begin{align*}
u_{\epsilon}(x)\le \frac{C\epsilon^{\alpha}}{\epsilon^{\alpha} + |x - \hat{x}_{\epsilon}|^{\alpha}},
\end{align*}
where $C$ is positive constant.

\end{theorem}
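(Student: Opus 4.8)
\textbf{Proof proposal for Theorem \ref{th5.1}.} The plan is to repeat, with only notational changes, the four-step scheme used for Theorem \ref{th1.1}, replacing the pure-power nonlinearity $|u|^{p-2}u$ by $f(u)$ and checking at each step that hypotheses $(\textbf{f}_1)$--$(\textbf{f}_4)$ deliver exactly what the power $|u|^{p-2}u$ delivered. First I would set up the penalized problem
\[
\epsilon^{2s}(-\Delta)^s u + V u = \chi_{\Lambda}(x)f(u_+) + \chi_{\R^N\backslash\Lambda}(x)\min\bigl(f(u_+),\,\mathcal{P}_{\epsilon}(x)u_+\bigr),
\]
with $\mathcal{P}_{\epsilon}$ as in \eqref{eq4.5}, and verify the analogue of Lemma \ref{le2.2}: the growth bound $(\textbf{f}_2)$ together with the Gagliardo--Nirenberg inequality (Proposition \ref{wpr2.1}) gives the $C^1$ regularity and compactness exactly as before, while $(\textbf{f}_3)$ (the Ambrosetti--Rabinowitz condition) gives boundedness of Palais--Smale sequences and the Mountain Pass geometry, so the penalized functional $\widehat{\mathbf{J}}_{\epsilon}$ has a positive critical point $\hat{u}_{\epsilon}$, with the same $C^{1,\gamma}$ regularity and strong-maximum-principle positivity as in Lemma \ref{le2.3}.

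Next I would carry out the concentration analysis of Section \ref{s3}. The limiting problem becomes $(-\Delta)^s v + a v = f(v)$; here $(\textbf{f}_1)$--$(\textbf{f}_4)$ are precisely the standard hypotheses under which this autonomous equation has a positive ground state and the mountain pass level equals the ground state level, and $(\textbf{f}_4)$ gives the Nehari-manifold characterization needed for the upper bound $\limsup_{\epsilon\to0}\epsilon^{-N}c_{\epsilon}\le\inf_{x\in\Lambda}\mathcal{C}_f(V(x))$ as in Lemma \ref{GGle3.2}. The lower bound (Lemma \ref{le3.1}) and the resulting concentration statement (Lemma \ref{le2.5}) go through verbatim: the estimate of the nonlocal remainder term $R_n$ did not use the specific form of the nonlinearity, only the uniform energy bound and the Hardy-inequality control \eqref{eq3.3} of the $L^2$ tail, and these are unchanged. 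This yields a family of points $\hat{x}_{\epsilon}\in\overline{\Lambda}$ with $\liminf_{\epsilon\to0}\|\hat{u}_{\epsilon}\|_{L^\infty(B_{\epsilon\rho}(\hat{x}_{\epsilon}))}>0$, $V(\hat{x}_{\epsilon})\to\min_{\Lambda}V$, and decay of $\hat{u}_{\epsilon}$ outside $B_{\epsilon R}(\hat{x}_{\epsilon})$, together with the uniform bound $\sup_{\Lambda}\hat{u}_{\epsilon}\le C_{\infty}$ from the $L^\infty$ estimate of \cite[Appendix D]{20}.

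Finally I would return to the original problem via the comparison argument of Section \ref{s4}. The point where $(\textbf{f}_1)$ is used is exactly here: on $\R^N\backslash B_{R\epsilon}(\hat{x}_{\epsilon})$ the concentration gives $\hat{u}_{\epsilon}$ small, so $f(\hat{u}_{\epsilon})=o(\hat{u}_{\epsilon}^{1+\tilde{\kappa}})$ with $\tilde{\kappa}=\tfrac{2s+2\kappa}{\alpha-\nu}$, whence $f(\hat{u}_{\epsilon})\le \mathcal{P}_{\epsilon}(x)\hat{u}_{\epsilon}$ once one has the a priori bound $\hat{u}_{\epsilon}(x)\le C\epsilon^{\alpha-\nu}|x-\hat{x}_{\epsilon}|^{-(\alpha-\nu)}$ of the right order; this is why the definition of $\tilde{\kappa}$ is tuned to $\alpha-\nu$. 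Concretely, after rescaling $v_{\epsilon}(x)=\hat{u}_{\epsilon}(\epsilon x+\hat{x}_{\epsilon})$ the penalized equation linearizes to $(-\Delta)^s v_{\epsilon}+(1-\delta)V_{\epsilon}v_{\epsilon}\le \widetilde{P}_{\epsilon}v_{\epsilon}$ on $\R^N\backslash B_R(0)$, the supersolution $\overline{U}=CR^{\alpha}f^{\beta}_{\alpha}$ from Proposition \ref{pr3.2} still works since that computation is about $(-\Delta)^s$ and not about $f$, and testing $(-\Delta)^s(\overline{U}-v_{\epsilon})+V_{\epsilon}(\overline{U}-v_{\epsilon})-\widetilde{P}_{\epsilon}(\overline{U}-v_{\epsilon})\ge0$ against $(\overline{U}-v_{\epsilon})^-$ together with the Hardy inequality \eqref{eq1.4} forces $v_{\epsilon}\le\overline{U}$, i.e. the claimed decay. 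Then $(\hat{u}_{\epsilon})^{p-2}$ is replaced by $f(\hat{u}_{\epsilon})/\hat{u}_{\epsilon}$, and $(\textbf{f}_1)$ gives $f(\hat{u}_{\epsilon})/\hat{u}_{\epsilon}\le\mathcal{P}_{\epsilon}$ outside $\Lambda$, so $\hat{u}_{\epsilon}$ solves \eqref{eq5.1}. The main obstacle, and the only place requiring genuine care rather than bookkeeping, is this last bootstrap: one must run the comparison argument and the smallness input from $(\textbf{f}_1)$ simultaneously, choosing $\alpha$ close enough to $N-2s$ (so that $\overline{U}-v_{\epsilon}\in\dot{H}^s(\R^N)$ and the test function is admissible) while keeping $\alpha>2s/(p-2)$, and matching powers so that $\tilde\kappa$-smallness of $f$ exactly absorbs the decay rate $\alpha-\nu$ of the supersolution; everything else is a transcription of Sections \ref{s2}--\ref{s4}.
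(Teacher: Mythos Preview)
Your proposal is correct and follows essentially the same four-step scheme as the paper's own proof. One small clarification: the final step is sequential rather than simultaneous --- the linearization \eqref{eq5.10} uses only the weak consequence $f(t)/t\to 0$ of $(\mathbf{f}_1)$, the comparison with $f^\beta_\alpha$ then yields decay with exponent $\alpha$ (not $\alpha-\nu$), and the parameter $\nu$ enters only at the very end via $f(\hat u_\epsilon)/\hat u_\epsilon\le C\hat u_\epsilon^{\tilde\kappa}\le C\epsilon^{\alpha\tilde\kappa}|x|^{-\alpha\tilde\kappa}\le\mathcal P_\epsilon(x)$, where $\alpha\tilde\kappa=(2s+2\kappa)\tfrac{\alpha}{\alpha-\nu}>2s+2\kappa$ gives the strict inequality needed to absorb constants for small $\epsilon$.
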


\begin{proof}

We define the penalized nonlinearity   $\hat{g}_{\epsilon}:\mathbb{R}^N\times\mathbb{R}$ as
$$
\hat{g}_{\epsilon}(x,s):=\chi_{\Lambda}f(s_+) + \chi_{\mathbb{R}^N\backslash\Lambda}\min\{f(s_+),\mathcal{P}_{\epsilon}(x)s_+\}.
$$
In the sequel, we denote $\widehat{G}_{\epsilon}(x,t) = \ds\int_{0}^{t}\hat{g}_{\epsilon}(x,s) \,{\mathrm{d}}s $ and define the penalized superposition operators $\hat{\mathbf{g}}_{\epsilon}$ and $\widehat{\mathbf{G}}_{\epsilon}$ as
$$
\hat{\mathbf{g}}_{\epsilon}(u)(x) = \hat{g}_{\epsilon}(x,u(x))\ \text{and}\ \widehat{\mathbf{G}}_{\epsilon}(u)(x) = \widehat{G}_{\epsilon}(x,u(x)).
$$
Accordingly, the penalized functional $\hat{J}_{\epsilon}:\mathcal{D}^s_{V,\epsilon}(\mathbb{R}^N)\to \mathbb{R}$ is given by
$$
\hat{J}_{\epsilon}(u) = \frac{1}{2}\int_{\mathbb{R}^N}(\epsilon^{2s}|(-\Delta)^{s/2} u|^2 + V(x)|u|^2) \,{\mathrm{d}}y   - \int_{\mathbb{R}^N}\widehat{\mathbf{G}}_{\epsilon}(u) \,{\mathrm{d}}y.
$$
By conditions $(f_2)$ and $(f_3)$, we can verify using the same argument in the proof of Lemma \ref{le2.2} that $\hat{J}_{\epsilon}$ is $C^1$ and satisfies (P.S.) condition. Then similar to Lemma \ref{le2.3}, one can show that the Mountain Pass value
$$
\hat{c}_{\epsilon} := \inf_{\gamma\in\widehat{\Gamma}_{\epsilon}}\max_{t\in [0,1]}\hat{J}_{\epsilon}(\gamma(t))
$$
can be achieved by a positive function $\hat{u}_{\epsilon}\in \mathcal{D}^s_{V,\epsilon}$, where
\begin{equation*}
\widehat{\Gamma}_{\epsilon} : = \{\gamma\in (C[0,1],\mathcal{D}^s_{V,\epsilon}(\R^N)):\gamma(0) = 0,\ \hat{J}_{\epsilon}(\gamma(1)) < 0\}.
\end{equation*}
Furthermore, it holds
\begin{equation}\label{eq5.2}
\epsilon^{2s}(-\Delta)^s\hat{u}_{\epsilon} + V(x)\hat{u}_{\epsilon} = \hat{\mathbf{g}}_{\epsilon}(\hat{u}_{\epsilon}).
\end{equation}

From \cite{S,FQT} and the condition on $f$, the following limiting problem
$$
(-\Delta)^su + au = f(u)
$$
has a positive ground state $\hat{u}$. Moreover, the least energy
$$
\hat{\mathcal{C}}(a) = \frac{1}{2}\Big(\int_{\R^N}|(-\Delta)^{s/2}\hat{u}|^2 + a|\hat{u}|^2\Big) \,{\mathrm{d}}x   - \int_{\R^N}F(\hat{u}) \,{\mathrm{d}}x
$$
is continuous and increasing.  Then, by the same argument in Lemma \ref{le2.5}, there exists a $\hat{x}_{\epsilon}\in\Lambda$ such that
\begin{eqnarray}\label{eq5.9}
\begin{split}
  &(1')\ \liminf_{\epsilon\to 0}\|\hat{u}_{\epsilon}\|_{L^{\infty}(B_{\epsilon\rho}(\hat{x}_{\epsilon}))}> 0,\\
  &(2')\ \lim_{\epsilon\to 0}V(\hat{x}_{\epsilon}) = \inf_{\Lambda}V,\\
  &(3')\ \lim\limits_{{R\to \infty}\atop{{\epsilon\to 0}}}\|\hat{u}_{\epsilon}\|_{L^{\infty}(U\backslash B_{\epsilon R}(\hat{x}_{\epsilon}))} =  0.
\end{split}
\end{eqnarray}
As a result, using $(f_1)$ to linearize \eqref{eq5.2}, we get
\begin{equation}\label{eq5.10}
  \left\{
    \begin{array}{ll}
      \epsilon^{2s}(-\Delta)^{s} \hat{u}_{\epsilon} + (1 - \delta)V\hat{u}_{\epsilon}\leq {P}_{\epsilon}\hat{u}_{\epsilon}, & \text{in}\ \mathbb{R}^N\backslash  B_{R\epsilon}(x_{\epsilon}), \vspace{2mm}\\
      \hat{u}_{\epsilon}\leq \widehat{C}_{\infty} & \text{in}\ \Lambda.
    \end{array}
  \right.
\end{equation}
Then, by the same argument in Section \ref{s4}, it holds
$$
\hat{u}_{\epsilon}(x)\le \frac{C\epsilon^{\alpha}}{\epsilon^{\alpha} + |x - \hat{x}_{\epsilon}|^{\alpha}},\ \forall x\in\R^N,
$$
where $\alpha$ is the same as that in \eqref{eq4.5}. Following, for every $x\in\R^N\backslash\Lambda$, since $\alpha\tilde{\kappa}>2s + 2\kappa$, letting $\epsilon>0$ be small enough, it holds
\begin{align*}
 \frac{f(\hat{u}_{\epsilon})}{\hat{u}_{\epsilon}}&\le (\hat{u}_{\epsilon})^{\tilde{\kappa}}\le \frac{C\epsilon^{\alpha\tilde{\kappa}}}{|x|^{\alpha\tilde{\kappa}}}\le \frac{\epsilon^{2s + 2\kappa}}{|x|^{2s + \kappa}} = \mathcal{P}_{\epsilon}(x).
\end{align*}

As a result, $\hat{u}_{\epsilon}$ is a solution to the origin problem \eqref{eq5.1}. This completes the paper.

\end{proof}

\bigskip
\bigskip
{\bf Acknowledgements: }   L.  Duan  is partially  supported by China Scholarship Council
(No.201906770024).
Y.  Peng  is supported by  the grant NSFC (No. 11501143).

\end{document}